\title[Flag statistics from the Ehrhart series multi-hypersimplices]{Flag statistics from the Ehrhart series of multi-hypersimplices}
\subjclass[2000]{05A19, 52B11}
\keywords{Ehrhart series, descents, Eulerian numbers, colored permutations, hypercube}
\date{}
\author{Guo-Niu Han}
\address{Institut de Recherche Math\'ematique Avanc\'ee\\
Universit\'e de Strasbourg and CNRS\\
7~rue Ren\'e Descartes\\
67084 Strasbourg\\
France}
\email{guoniu.han@unistra.fr}
\author{Matthieu Josuat-Vergès}
\thanks{The second author is supported by ANR CARMA (ANR-12-BS01-0017)}
\address{Institut Gaspard Monge\\
Université Paris-Est Marne-la-Vallée and CNRS \\ 
5~Boulevard Descartes, 77454 Marne-la-Vallée CEDEX \\
France}
\email{matthieu.josuat-verges@univ-mlv.fr}
\newtheorem{theo}{Theorem}[section]
\newtheorem{lem}[theo]{Lemma}
\newtheorem{prop}[theo]{Proposition}
\theoremstyle{definition}
\newtheorem{defi}[theo]{Definition}
\DeclareMathOperator{\des}{des}
\DeclareMathOperator{\fdes}{fdes}
\DeclareMathOperator{\cdes}{cdes}
\DeclareMathOperator{\fexc}{fexc}
\DeclareMathOperator{\std}{std}
\DeclareMathOperator{\cstd}{cstd}
\begin{document}

\begin{abstract}
It is known that the normalized volume of standard hypersimplices (defined as some slices of the unit hypercube) 
are the Eulerian numbers. More generally, a recent conjecture of Stanley relates the Ehrhart 
series of hypersimplices with descents and excedences in permutations.
This conjecture was proved by Nan Li, who also gave a generalization to colored permutations.
In this article, we give another generalization to colored permutations, using the flag statistics
introduced by Foata and Han. 
We obtain in particular a new proof of Stanley's conjecture, and some combinatorial identities 
relating pairs of Eulerian statistics on colored permutations.
\end{abstract}

\maketitle


\section{Introduction}

A modern combinatorial definition of the Eulerian numbers $A_{n,k}$ is given by counting descents in permutations:
\begin{equation} \label{defank}
  A_{n,k} := \# \{  \sigma \in \mathfrak{S}_n \, : \, {\rm des}(\sigma)=k-1  \}.
\end{equation}
Foata suggested in \cite{foata} the problem that we describe below.
It is known that the Eulerian numbers $A_{n,k}$ satisfy
\[
  \frac{A_{n,k}}{n!} = {\rm Vol}(  \big\{ v\in [0,1]^n \, : \,  k-1 \leq \sum v_i \leq k  \big\} ),
\]
this is essentially a calculation due to Laplace (see \cite{foata} for details).
But the combinatorial definition can be easily translated in the following way:
\[
  \frac{A_{n,k}}{n!} = {\rm Vol}(   \{ v\in [0,1]^n \, : \,  {\rm des}(v) = k-1  \} ).
\]
The problem is to find a measure-preserving bijection between the two sets, to explain why they have the same volume. 
A simple solution was given by Stanley~\cite{stanley}.

%

The set $\{ v \in [0,1]^n \, : \, k\leq \sum v_i \leq k+1 \}$ is in fact a convex integral polytope known as 
the hypersimplex, and in this context we can consider the Ehrhart series, which is a generalization of the volume.
This led to a recent conjecture by Stanley about the Ehrhart series of 
the hypersimplex (more precisely, a partially open version of the hypersimplex), which was
proved by Nan Li \cite{li} in two different ways.
It is remarkable that two Eulerian statistics are needed to state the conjecture, which says
that the Ehrhart series of a hypersimplex is the descent generating function for permutations
with a given number of excedences.
Nan Li also extended the result to colored permutations by considering the hypercube $[0,r]^n$
for some integer $r>0$, and the polytopes $\{ v\in [0,r]^n \,:\, k\leq \sum v_i \leq k+1 \}$
are the multi-hypersimplices referred to in the title of this article.
We would like to mention that besides Stanley's conjecture, some recent works deals with the
geometry and combinatorics of hypersimplices, see \cite{hibi,ohsugi}.

The goal of this article is to give another generalization of Stanley's conjecture to colored 
permutations. Our result is stated in terms of the flag descents and flag excedences in colored permutations,
and relies on some related work by Foata and Han \cite{foatahan}. 
Our method gives in particular a new proof of Stanley's conjecture in the uncolored case.
Our method can roughly be described as follows. We first consider the case of the half-open hypercube $[0,r)^n$, 
where an analog of Stanley's conjecture in terms of descents and inverse descents can be proved in a rather elementary 
way. We can relate the half-open hypercube $[0,r)^n$ with the usual hypercube $[0,r]^n$ via an inclusion-exclusion 
argument. Then, it remains only to prove an identity relating two generating functions for colored permutations.

This article is organized as follows.
Section~\ref{sec:prelim} contains some preliminaries.
Our main results are Theorems~\ref{theobnr} and \ref{theobnr2}, whose particular case $r=1$ gives Stanley's conjecture.
The core of the proof is in Sections~\ref{sec:stanbij} and \ref{sec:inclexcl}, but
it also relies on some combinatorial results on colored permutations which are in Sections~\ref{sec:chro},
\ref{sec:genfunc} and~\ref{sec:bij}.

\section*{Acknowledgement}

We thank Dominique Foata for his corrections and comments on this article.

\section{Triangulations of the unit hypercube}

\label{sec:prelim}

This section contains nothing particularly new, but we introduce some notation and background (see \cite{beck,stanley3}).
Let $ \mathcal{X} \subset \mathbb{R}^n$ be a convex polytope with integral vertices.
The {\it Ehrhart polynomial} $E(\mathcal{X},t)$ is defined as the unique polynomial in $t$ such that,
for any integer $t>0$,
\begin{equation} \label{defehr}
  E( \mathcal{X} , t ) = \# \big(  t \mathcal{X} \cap \mathbb{Z}^n \big)
\end{equation}
where $t\mathcal{X} := \{ tx \, : \, x\in\mathcal{X} \} $.
The {\it Ehrhart series} of $\mathcal{X}$ is defined as
\begin{equation} \label{defehr2}
  E^* ( \mathcal{X}, z ) := (1-z)^{n+1} \sum_{t \geq 0} E( \mathcal{X}, t ) z^t.
\end{equation}
From a general result of Stanley \cite{stanley3}, the series $E^*(\mathcal{X},z)$ is in fact a polynomial with positive 
integral coefficients. As it often happens, it is an interesting problem to find their combinatorial meaning for special 
polytopes. Perhaps the most basic example is the unit hypercube $[0,1]^n$ which has the $n$th Eulerian polynomial as 
Ehrhart series, as will be detailed below. Although we do not use this language here, a general method to find the 
Ehrhart series 
of a polytope is to use unimodular shellable triangulations (as was done for example in \cite{li}) 
and it is essentially the idea behind what follows.

\begin{defi}
If $v=(v_1,\dots,v_n)\in\mathbb{R}^n$, let 
\[\des(v) = \#\{ i \, : 1\leq i \leq n-1, \; v_i > v_{i+1} \}.\]
We define the {\it standardization} $\std(v)$ of $v$ to be the unique permutation
$\sigma\in\mathfrak{S}_n$ such that for all $i<j$ we have
$v_i \leq v_j$ iff $\sigma_i<\sigma_j$.
For each permutation $\sigma\in\mathfrak{S}_n$, let
\[
  \mathcal{S}_{\sigma} = \{ v \in [0,1]^n \, : \, \std(v)=\sigma \}.
\]
\end{defi}

For example, one can check that if $x<y<z$, $\std(x,y,x,z,y,x,x)=1527634$.
Note that the unit hypercube $[0,1]^n$ is the disjoint union of the subsets $\mathcal{S}_{\sigma}$
for $\sigma \in \mathfrak{S}_n $.
Geometrically, each $\mathcal{S}_\sigma$ is a unit simplex where some facets are removed.
So it is not a polytope in the usual sense; we should call it a ``partially open'' polytope.
But note that Equations~\eqref{defehr} and~\eqref{defehr2} make sense even when $\mathcal{X}$ 
is not a polytope, so in particular $E^*(\mathcal{S}_\sigma,z)$
is well defined, and we have:

\begin{lem} \label{ehrssigma}
$E^*(\mathcal{S}_\sigma,z) = z^{\des(\sigma^{-1})}$.
\end{lem}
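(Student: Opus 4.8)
The plan is to describe $\mathcal{S}_\sigma$ explicitly as a half-open unimodular simplex, compute its Ehrhart polynomial by a shift substitution, and then sum the resulting series. Write $\tau=\sigma^{-1}$ and let $\tau_1,\dots,\tau_n$ be the positions listed in increasing order of their $\sigma$-value, so that $\sigma_{\tau_k}=k$. First I would unwind the definition of standardization: the condition $\std(v)=\sigma$ forces $v_{\tau_1}\le v_{\tau_2}\le\cdots\le v_{\tau_n}$, and applying the defining equivalence to the pair $\{\tau_k,\tau_{k+1}\}$ shows that this inequality is strict exactly when $\tau_k>\tau_{k+1}$. Let $D=\{\,k:\tau_k>\tau_{k+1}\,\}$ be the descent set of $\tau$, so that $|D|=\des(\sigma^{-1})$. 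Together with the cube constraints $0\le v_{\tau_1}$ and $v_{\tau_n}\le 1$, this presents $\mathcal{S}_\sigma$, after relabeling $w_k=v_{\tau_k}$, as the simplex $\{0\le w_1\le\cdots\le w_n\le 1\}$ with the internal facets $w_k=w_{k+1}$ removed precisely for $k\in D$; the reverse inclusion, and hence equality of the two sets, follows from the fact that these half-open simplices tile $[0,1]^n$ as $\sigma$ ranges over $\mathfrak{S}_n$.

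Next I would count the lattice points of the dilate. Since standardization is invariant under positive scaling, the integer points of $t\mathcal{S}_\sigma$ are the integer vectors $w$ satisfying $0\le w_1\le\cdots\le w_n\le t$ with $w_k<w_{k+1}$ for every $k\in D$. Writing $d=|D|$ and substituting $u_i=w_i-\#\{k\in D:k<i\}$ turns each strict inequality into a weak one and lowers the upper bound to $t-d$, yielding a bijection onto $\{0\le u_1\le\cdots\le u_n\le t-d\}$. Hence $E(\mathcal{S}_\sigma,t)=\binom{t-d+n}{n}$.

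Finally, I would substitute this into the definition of the Ehrhart series. Since $\binom{t-d+n}{n}$ vanishes for $0\le t<d$ (because $d\le n-1$), the reindexing $s=t-d$ gives
\[
  \sum_{t\ge 0}\binom{t-d+n}{n}z^t
   = z^{d}\sum_{s\ge 0}\binom{s+n}{n}z^{s}
   = \frac{z^{d}}{(1-z)^{n+1}},
\]
whence $E^*(\mathcal{S}_\sigma,z)=(1-z)^{n+1}\cdot z^{d}/(1-z)^{n+1}=z^{d}=z^{\des(\sigma^{-1})}$.

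The step I expect to be most delicate is the first one: the bookkeeping of which facets of the simplex are open. The lattice-point count and the summation are routine once the open facets are correctly located, but extracting ``strict exactly at the descents of $\sigma^{-1}$'' from the tie-breaking convention of $\std$ (as opposed to descents of $\sigma$, or ascents) is where an error would most easily arise; I would verify it against the example $\std(x,y,x,z,y,x,x)=1527634$ recorded before the lemma.
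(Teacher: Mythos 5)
Your proof is correct and takes essentially the same route as the paper's: the same description of $\mathcal{S}_\sigma$ as a half-open simplex with the facet $w_k=w_{k+1}$ removed exactly at the descents of $\sigma^{-1}$, the same descent-counting shift substitution giving $E(\mathcal{S}_\sigma,t)=\binom{n+t-\des(\sigma^{-1})}{n}$, and the same geometric-series summation. The only difference is one of detail: where the paper says ``we can check,'' you carefully justify the facet bookkeeping and settle the reverse inclusion via the fact that both families of half-open simplices tile $[0,1]^n$.
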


\begin{proof}
We can check that $v \in [0,1]^n$ is in $\mathcal{S}_{\sigma}$
if and only if $v_{\sigma^{-1}(1)} \leq \dots \leq v_{\sigma^{-1}(n) }  $,
and $v_{\sigma^{-1}(i)} < v_{\sigma^{-1}(i+1) }$ if $\sigma^{-1}(i)>\sigma^{-1}(i+1)$.
The number $E(\mathcal{S}_{\sigma},t)$ counts such sequences with the additional condition
that all elements are integers between $0$ and $t$. By defining
\[
  w_i = v_{\sigma^{-1}(i)} - \des( \sigma^{-1}(1),\dots, \sigma^{-1}(i) ),
\]
we have a bijection with integer sequences satisfying $0\leq w_1 \leq \dots \leq w_n \leq t-\des(\sigma^{-1})$,
so that
\[
  E(\mathcal{S}_{\sigma},t) = \binom { n + t - \des(\sigma^{-1}) }{n}.
\]
The expansion
\[
  \sum_{t\geq0 } \binom tn z^t = \frac{z^n}{(1-z)^{n+1} }
\]
permits to finish the proof.
\end{proof}

\begin{defi} The {\it Eulerian polynomials} are
\[
  A_n(z) := \sum_{\sigma \in \mathfrak{S}_n } z^{ \des(\sigma^{-1} ) } = \sum_{\sigma \in \mathfrak{S}_n } z^{ \des(\sigma ) }.
\]
Note that their coefficients are the numbers $A_{n,k}$ defined in Equation~\eqref{defank}.
\end{defi}

For example, $A_0(z)=A_1(z)=1$, $A_2(z)=1+z$, $A_3(z)=1+4z+z^2$, etc.
Since the Ehrhart series is additive with respect to disjoint union, we get from the previous lemma that
\[
  E^*( [0,1]^n , z ) =  A_n(z).
\]
Note that, since the Ehrhart polynomial of $[0,1]^n$ is clearly $(t+1)^n$, we have proved the classical 
identity:
\begin{equation}  \label{ideul}
  A_n(z) = (1-z)^{n+1} \sum_{t \geq 0 } (t+1)^n z^t.
\end{equation}

Let us turn to the case of the half-open hypercube $[0,1)^n$.
Note that we are now dealing with a half-open polytope, i.e., a polytope where 
some of the $(n-1)$-dimensional faces are removed. In this case, it is not {\it a priori} clear 
that the Ehrhart series is a polynomial with nonnegative integral coefficients.
We can decompose $[0,1)^n$ as the disjoint union of the polytopes:
\[
  \mathcal{T}_{\sigma} = \{ v \in [0,1)^n \, : \, \std(v)=\sigma \}.
\]
These are also simplices where some facets are removed, and we get:

\begin{lem} \label{ehrtsigma}
 $E^*(\mathcal{T}_\sigma,z) = z^{\des(\sigma^{-1}) + 1}$.
\end{lem}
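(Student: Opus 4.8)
The plan is to mimic the proof of Lemma~\ref{ehrssigma} almost verbatim, tracking the single change caused by replacing the closed cube $[0,1]^n$ by the half-open cube $[0,1)^n$. First I would describe the points of $\mathcal{T}_\sigma$: a vector $v\in[0,1)^n$ lies in $\mathcal{T}_\sigma$ exactly when $v_{\sigma^{-1}(1)}\leq\dots\leq v_{\sigma^{-1}(n)}$, with a strict inequality $v_{\sigma^{-1}(i)}<v_{\sigma^{-1}(i+1)}$ whenever $\sigma^{-1}(i)>\sigma^{-1}(i+1)$, together with the extra constraint $v_i<1$ for all $i$. Since the maximal coordinate is $v_{\sigma^{-1}(n)}$, this last constraint is simply $v_{\sigma^{-1}(n)}<1$.

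Next I would pass to the Ehrhart polynomial. Because standardization is invariant under the positive scaling $v\mapsto tv$, we have $t\mathcal{T}_\sigma=\{\,w\in[0,t)^n:\std(w)=\sigma\,\}$, so the lattice points of $t\mathcal{T}_\sigma$ are the integer vectors with $\std(w)=\sigma$ and every coordinate in $\{0,1,\dots,t-1\}$. This is the \emph{only} difference from Lemma~\ref{ehrssigma}, where the coordinates ranged over $\{0,1,\dots,t\}$: the top of the chain must now satisfy $w_{\sigma^{-1}(n)}\leq t-1$ instead of $\leq t$. Applying the same substitution $w_i=v_{\sigma^{-1}(i)}-\des(\sigma^{-1}(1),\dots,\sigma^{-1}(i))$ then yields a bijection with integer sequences $0\leq w_1\leq\dots\leq w_n\leq t-1-\des(\sigma^{-1})$, whence
\[
  E(\mathcal{T}_\sigma,t)=\binom{n+t-1-\des(\sigma^{-1})}{n}.
\]
Feeding this into the same expansion $\sum_{t\geq0}\binom{t}{n}z^t=z^n/(1-z)^{n+1}$ (after the obvious reindexing) gives $\sum_{t\geq0}E(\mathcal{T}_\sigma,t)z^t=z^{\des(\sigma^{-1})+1}/(1-z)^{n+1}$, and multiplying by $(1-z)^{n+1}$ produces the claimed $E^*(\mathcal{T}_\sigma,z)=z^{\des(\sigma^{-1})+1}$.

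There is no serious obstacle here; the whole content is the bookkeeping in the previous paragraph. The one step that deserves care is the translation of the half-open condition $v_i<1$ into a lattice-point count: after scaling, $v_i<1$ becomes $w_i<t$, i.e.\ $w_i\leq t-1$ for integer coordinates, and it is precisely this shift of the upper bound by one unit that accounts for the extra $+1$ in the exponent compared with Lemma~\ref{ehrssigma}. As a sanity check I would test the formula in the case $n=1$, where $\mathcal{T}_\sigma=[0,1)$ gives $E(\mathcal{T}_\sigma,t)=t$ and hence $E^*(\mathcal{T}_\sigma,z)=z$, matching $z^{\des(\sigma^{-1})+1}=z^1$.
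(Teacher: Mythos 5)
Your proposal is correct and is exactly the argument the paper intends: the paper's own proof is the single sentence ``It is similar to the one of Lemma~\ref{ehrssigma}'', and you have carried out precisely that adaptation, with the half-open condition shifting the upper bound to $w_n \leq t-1-\des(\sigma^{-1})$ and hence producing the extra factor $z$.
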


\begin{proof}
It is similar to the one of Lemma~\ref{ehrssigma}.
\end{proof}

Thus, the half-open hypercube has the Ehrhart series $zA_n(z)$.
Besides, its Ehrhart polynomial is clearly $E([0,1)^n,t)=t^n$.
Once again we get Identity~\eqref{ideul}, with an additional factor $z$.

Let us present another example of a Ehrhart series that will be used in the sequel. It is 
presented in \cite[Section 7.19]{stanley2} in the context of quasi-symmetric functions.
Let $\lambda$ be a Young diagram (we use the French notation).
Let $\mathbb{Z}^{\lambda}$ (respectively, $\mathbb{R}^\lambda$) denote the set of fillings of $\lambda$ with 
integers (respectively, real numbers).
And let $\mathcal{Y}_{\lambda}$ denote the set of semi-standard fillings of $\lambda$ by real numbers in $(0,1]$
where semi-standard mean weakly increasing in rows and strictly increasing in columns.
Clearly, $\mathcal{Y}_{\lambda}$ is a (partially open) convex polytope in $\mathbb{R}^\lambda$.
A semi-standard tableau with largest entry less than $t$ is just an element
of $\mathbb{Z}^{\lambda} \cap t \mathcal{Y}_{\lambda} $, so that 
\[
  E(\mathcal{Y}_\lambda,t) = s_{\lambda}(1^t),
\]
the Schur function $s_{\lambda}$ where $t$
variables are set to $1$ and the others to $0$.
Let $SYT(\lambda)$ denote the set of standard tableaux of shape $\lambda$, and recall that 
a descent of a standard tableau is an entry $i$ such that the entry $i+1$ is in an upper row.
Let $\des(T)$ denote the number of descents of a standard tableau, then we have:

\begin{prop}
\begin{align} \label{sytdes}
  E^*( \mathcal{Y}_{\lambda} , z ) = (1-z)^{n+1} \sum_{ t\geq 0 } s_{\lambda}(1^t) z^t = \sum_{ T\in SYT(\lambda) } z^{\des(T)+1}.
\end{align}
\end{prop}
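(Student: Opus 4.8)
The first equality is merely the definition~\eqref{defehr2} of the Ehrhart series applied to the value $E(\mathcal{Y}_\lambda,t)=s_\lambda(1^t)$ recorded just above, so the whole content lies in the second equality. My plan is to first rewrite $s_\lambda(1^t)$ as an explicit sum over standard tableaux, and then to feed this expression into the series and collapse it with the very binomial identity already used for Lemma~\ref{ehrssigma}.

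First I would standardize. Reading the cells of $\lambda$ in a fixed order, every semi-standard filling by integers in $\{1,\dots,t\}$ acquires a standardization $T\in SYT(\lambda)$, and grouping the fillings by their standardization sorts the count $s_\lambda(1^t)$ into fibers indexed by $T$. The fiber over a given $T$ is counted by the fundamental quasi-symmetric function $L_{D(T)}$ evaluated at $t$ ones, where $D(T)=\{\,i:\ i\text{ is a descent of }T\,\}$; this is exactly the expansion $s_\lambda=\sum_{T\in SYT(\lambda)}L_{D(T)}$ of \cite[Section~7.19]{stanley2}. Now $L_{D(T)}(1^t)$ counts the integer sequences $1\le i_1\le i_2\le\cdots\le i_n\le t$ that are strictly increasing at each position $j\in D(T)$ and weakly increasing otherwise, and the substitution $i_j\mapsto i_j-\#\{\,k<j:\ k\in D(T)\,\}$—the same device as in the proof of Lemma~\ref{ehrssigma}—turns those $\des(T)$ strict inequalities into weak ones, giving
\[
  s_\lambda(1^t)=\sum_{T\in SYT(\lambda)}\binom{\,t-\des(T)+n-1\,}{n}.
\]

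It then remains to insert this into the series. Using the expansion $\sum_{m\ge0}\binom{m}{n}z^m=\frac{z^n}{(1-z)^{n+1}}$ already invoked for Lemma~\ref{ehrssigma}, a shift of the summation index gives
\[
  \sum_{t\ge0}\binom{\,t-\des(T)+n-1\,}{n}z^t=\frac{z^{\des(T)+1}}{(1-z)^{n+1}},
\]
so multiplying by $(1-z)^{n+1}$ and summing over $T\in SYT(\lambda)$ yields precisely $\sum_{T}z^{\des(T)+1}$, which is the desired identity.

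The generating-function manipulation of the last paragraph is routine and mirrors Lemma~\ref{ehrssigma} step for step; the real obstacle is the standardization. One must break ties among equal entries by a fixed global rule and check that the strict inequalities produced in each fiber land exactly on the descent set $D(T)$. The delicate point is that equal entries of a semi-standard filling need not share a row—column-strictness only forbids them in a common column—so no naive row-comparison will do, and one is forced into the $(P,\omega)$-partition bookkeeping underlying \cite[Section~7.19]{stanley2}. This is exactly what singles out $\des(T)$, rather than a coarser count, as the correct statistic.
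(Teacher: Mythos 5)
Your proof is correct and is essentially the paper's own argument: grouping the semi-standard fillings by standardization is exactly the paper's decomposition $\mathcal{Y}_\lambda = \biguplus_{U \in SYT(\lambda)} \mathcal{Y}_U$, and your evaluation of $L_{D(T)}(1^t)$ as $\binom{t-\des(T)+n-1}{n}$ via the shift trick is precisely the computation behind the paper's key lemma $E^*(\mathcal{Y}_U,z)=z^{\des(U)+1}$, which the paper proves by the same device as Lemma~\ref{ehrssigma}. The only difference is packaging (fundamental quasi-symmetric functions versus Ehrhart series of the pieces), not substance.
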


Let us sketch the proof.
The reading word $w(T)$ of a semi-standard tableau $T\in\mathcal{Y}_\lambda$ is defined by ordering 
its entries row by row, from left to right and from top to bottom.
Then, the standardization $\std(T)$ is defined to be the unique standard tableau $U$ 
of the same shape such that $\std(w(T)) = w(U) $.
The set $\mathcal{Y}_{\lambda}$ is partitioned into the subsets 
\[
  \mathcal{Y}_{U} = \{ T\in\mathcal{Y}_\lambda \, : \, \std(T)=U \}
\]
where $U\in SYT(\lambda)$. Now, the previous proposition is a consequence of the following:

\begin{lem}
 $ E^*( \mathcal{Y}_U , z ) = z^{\des(U)+1}$.
\end{lem}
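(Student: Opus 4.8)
The plan is to mimic the proof of Lemma~\ref{ehrssigma}: describe $\mathcal{Y}_U$ by an explicit chain of weak and strict inequalities, count its integer points after a shift, and then read off the Ehrhart series from the expansion $\sum_{t\ge0}\binom{t}{n}z^t=z^n/(1-z)^{n+1}$. Write $n=|\lambda|$, let $c_k$ denote the cell of $\lambda$ carrying the entry $k$ in the standard tableau $U$, and for a filling $T\in\mathcal{Y}_\lambda$ put $b_k=T(c_k)$. The combinatorial heart of the argument is the claim that $T\in\mathcal{Y}_U$ if and only if
\[
  0<b_1\le b_2\le\cdots\le b_n\le 1,
\]
where the inequality $b_k<b_{k+1}$ is strict exactly when $k$ is a descent of $U$.

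To prove this claim I would unwind the definition of $\std(T)=U$: sorting the cells by increasing value of $T$, with ties broken according to the position in the reading word $w(T)$, must reproduce the numbering of $U$. Since $U$ numbers $c_k$ before $c_{k+1}$, this forces $b_k\le b_{k+1}$, and it forces the strict inequality $b_k<b_{k+1}$ precisely when $c_k$ occurs \emph{after} $c_{k+1}$ in the reading word (otherwise a tie $b_k=b_{k+1}$ would make standardization number $c_{k+1}$ first, as the earlier letter receives the smaller standardized value). Because the reading word lists the entries of higher rows before those of lower rows, and left to right within a row, $c_k$ occurs after $c_{k+1}$ exactly when $c_{k+1}$ lies in a strictly higher row than $c_k$; the possibility that they share a row is excluded since within a row the entries of $U$ increase from left to right and $k<k+1$. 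By definition this is exactly the condition that $k$ be a descent of $U$, which proves the claim, the converse direction being immediate from the same discussion.

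It then remains to count. Writing $d=\des(U)$ and scaling by $t$, the lattice points of $t\mathcal{Y}_U$ are the integer sequences $1\le b_1\le\cdots\le b_n\le t$ that are strict at the $d$ descents of $U$. The substitution $b'_k=b_k-\#\{\,i<k : i\text{ is a descent of }U\,\}$ turns these into arbitrary weakly increasing sequences $1\le b'_1\le\cdots\le b'_n\le t-d$, whence
\[
  E(\mathcal{Y}_U,t)=\binom{n+t-d-1}{n}.
\]
Feeding this into the expansion $\sum_{t\ge0}\binom{t}{n}z^t=z^n/(1-z)^{n+1}$ yields $E^*(\mathcal{Y}_U,z)=z^{d+1}=z^{\des(U)+1}$, as desired.

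The main obstacle is the combinatorial claim of the first paragraph, namely matching the descents of $U$ with the forced strict inequalities; this requires careful bookkeeping of how standardization breaks ties through the reading word and how this interacts with the row condition in the definition of a descent of a standard tableau. Once this dictionary is in place, the remaining shift-and-sum computation is identical to that of Lemma~\ref{ehrssigma}.
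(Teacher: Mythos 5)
Your proof is correct and is essentially the argument the paper intends: the paper's entire proof of this lemma is the remark that it is ``essentially the same as'' Lemma~\ref{ehrtsigma}, i.e.\ precisely the chain-of-inequalities, shift, and binomial-series computation of Lemma~\ref{ehrssigma} transported to tableaux, which is what you carry out. Your dictionary between the strict inequalities and the descents of $U$ (via the reading-word tie-breaking in standardization) is exactly the detail the paper leaves implicit, and you verify it correctly.
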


\begin{proof}
This is essentially the same as Proposition~\ref{ehrtsigma}.
\end{proof}

\section{The generalization of Stanley's bijection}

\label{sec:stanbij}

In this section we adapt Stanley's bijection from \cite{stanley} to the case 
of the half-open hypercube $[0,r)^n$ (for some integer $r>0$), and $r$-colored permutations.
Note that a similar generalization was given by Steingr\'imsson in \cite[Section~4.4]{steingrimsson}.

\begin{defi}
 The set of $r$-{\it colored permutations} $\mathfrak{S}_n^{(r)}$ is the set of pairs
 $(\sigma,c)$ where $\sigma\in\mathfrak{S}_n$, $c=(c_i)_{1\leq i \leq n}$, and $c_i \in \{ 0,1, \dots,r-1 \}$ for all $i$.
 We define the {\it descent number} of a colored permutation as:
\[
  {\rm des}(\sigma,c) := \# \{ i \, : \, c_i>c_{i+1}, \text{ or } c_i=c_{i+1} \text{ and } \sigma_i>\sigma_{i+1} \},
\]
and we define the {\it flag descent number} \cite{bagno1,foatahan} as:
\[
  \fdes(\sigma,c) :=  r. {\rm des}(\sigma,c) + c_n.
\]
The {\it flag Eulerian numbers} are defined by $A^{(r)}_{0,0}:=1$, $A^{(r)}_{n,0}:=0$ if $n>0$, and
\[
  A_{n,k}^{(r)} := \# \big\{ \sigma \in \mathfrak{S}_n^{(r)} \, : \,  {\rm fdes}(\sigma) = k-1 \big\}
\]
if $n\geq 1$ and $1\leq k \leq rn$.
\end{defi}

We will not use here the group structure of colored permutations. Still, note that flag descents are indeed
related with it \cite{bagno1}.

To define our generalization of Stanley's bijection,
let $ (a_i)_{1\leq i \leq n} \in [0,r)^n$, and let $a_0=0$. 
Then the map $\phi( (a_i)_{1\leq i \leq n} ) = (b_i)_{1\leq i \leq n} $ is defined as follows:
\begin{equation} \label{defphi}
  b_i = \begin{cases}
          a_i-a_{i-1} & \text{ if } a_{i-1}\leq a_i, \\
          a_i-a_{i-1}+r & \text{ if } a_{i-1}> a_i.
        \end{cases}
\end{equation}
From this definition we get:
\begin{equation} \label{inversephi}
  a_i = \sum_{j=1}^i b_j  \mod r,
\end{equation}
where the modulo means that we take the unique representative in $[0,r)$.
In fact, it is elementary to check that Equations~\eqref{defphi} and~\eqref{inversephi} define 
two inverse bijections from $[0,r)^n$ to itself. 

\begin{defi}
If $v\in [0,r)^n$, we define $\fdes(v)=r.\des(v) +  v_n $. 
If $1 \leq k \leq rn $, let
\[
  \mathcal{F}^{(r)}_{n,k} := \{ v \in [0,r)^n \, : \, k-1 \leq \fdes(v) < k \},
\]
and
\[
  \mathcal{A}^{(r)}_{n,k} := \{ v \in [0,r)^n \, : \, k-1 \leq \sum v_i < k \}.
\]
For each colored permutation $(\sigma,c)$ we define the translated simplex:
\[
  \mathcal{T}_{(\sigma,c)} := c + \mathcal{T}_{\sigma}.
\]
Also, the {\it colored standardization} of $v\in [0,r)^n$ is $\cstd(v) = (\sigma,c) \in \mathfrak{S}^{(r)}_n$
where $c_i = \lfloor v_i \rfloor $ and $\sigma = \std( v_1 \mod 1,\dots, v_n \mod 1 )$.
\end{defi}

\begin{lem} \label{lem1}
$ \lfloor\fdes(v)\rfloor = \fdes( \cstd(v) )$. 
\end{lem}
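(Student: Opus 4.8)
The plan is to unwind both sides of the claimed identity $\lfloor \fdes(v) \rfloor = \fdes(\cstd(v))$ using the respective definitions, and to show that the relevant quantities match term by term. Writing $\cstd(v) = (\sigma,c)$, so that $c_i = \lfloor v_i \rfloor$ and $\sigma = \std(v_1 \bmod 1, \dots, v_n \bmod 1)$, I would compute the left-hand side as $\lfloor r\cdot\des(v) + v_n \rfloor$ and the right-hand side as $r\cdot\des(\sigma,c) + c_n$. Since $\des(v)$ and $\des(\sigma,c)$ are integers, the floor on the left only affects the last coordinate, giving $\lfloor \fdes(v) \rfloor = r\cdot\des(v) + \lfloor v_n \rfloor = r\cdot\des(v) + c_n$. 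Hence the identity reduces to the single combinatorial claim that $\des(v) = \des(\sigma,c)$.

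Next I would prove this reduced claim by comparing the two descent conditions position by position. A descent of $v$ occurs at position $i$ precisely when $v_i > v_{i+1}$; a descent of $(\sigma,c)$ occurs at $i$ when $c_i > c_{i+1}$, or when $c_i = c_{i+1}$ and $\sigma_i > \sigma_{i+1}$. The key observation is that the real number $v_i$ decomposes as $v_i = c_i + (v_i \bmod 1)$ with $c_i = \lfloor v_i \rfloor$ its integer part and $v_i \bmod 1 \in [0,1)$ its fractional part. Comparing $v_i$ with $v_{i+1}$ therefore splits into two cases: if the integer parts differ, then $v_i > v_{i+1}$ iff $c_i > c_{i+1}$; if the integer parts are equal, then $v_i > v_{i+1}$ iff the fractional parts satisfy $(v_i \bmod 1) > (v_{i+1} \bmod 1)$, which by the definition of the standardization $\sigma = \std(\dots)$ is exactly the condition $\sigma_i > \sigma_{i+1}$. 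This matches the definition of $\des(\sigma,c)$ in each case.

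The one point requiring a little care is the behavior of the standardization when two fractional parts coincide, i.e.\ when $v_i \bmod 1 = v_{i+1} \bmod 1$: in that situation $v_i > v_{i+1}$ fails (for equal integer parts), and correspondingly $\std$ assigns $\sigma_i < \sigma_{i+1}$ by its tie-breaking convention $v_i \le v_j \iff \sigma_i < \sigma_j$, so that $i$ is a descent of neither $v$ nor $(\sigma,c)$. Thus the equivalence holds in all cases, and the main obstacle—ensuring the fractional-part comparison is faithfully recorded by $\std$, including ties—is handled directly by the definition of standardization. Assembling the position-by-position equivalence gives $\des(v) = \des(\sigma,c)$, and combined with $\lfloor v_n \rfloor = c_n$ this yields $\lfloor \fdes(v) \rfloor = \fdes(\cstd(v))$, completing the proof.
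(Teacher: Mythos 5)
Your proof is correct and follows the same route as the paper, whose own proof is simply ``this follows straightforwardly from the definitions''; your write-up supplies exactly those definitional details (reducing to $\des(v)=\des(\sigma,c)$ via integrality of $r\cdot\des$, then splitting on whether the integer parts $c_i$, $c_{i+1}$ differ, with the tie-breaking convention of $\std$ handling equal fractional parts).
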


\begin{proof}
This follows straighforwardly from the definitions.
\end{proof}

\begin{lem} \label{lem1b}
$\mathcal{A}^{(r)}_{n,k} = \phi(\mathcal{F}^{(r)}_{n,k})$.
\end{lem}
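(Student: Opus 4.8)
The plan is to reduce the set equality to a single scalar identity relating the statistics that define the two sets. Since Equations~\eqref{defphi} and~\eqref{inversephi} exhibit $\phi$ as a bijection from $[0,r)^n$ onto itself, it suffices to prove that for every $a=(a_i)_{1\le i\le n}\in[0,r)^n$, writing $b=\phi(a)$, one has the equivalence $a\in\mathcal{F}^{(r)}_{n,k}\iff b\in\mathcal{A}^{(r)}_{n,k}$. Both memberships are threshold conditions on a single real number --- on $\fdes(a)$ for the left-hand side and on $\sum_i b_i$ for the right-hand side --- so the whole statement collapses to the identity
\[
  \sum_{i=1}^n b_i = \fdes(a).
\]

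First I would compute the left-hand sum directly from the definition~\eqref{defphi} of $\phi$. Writing each $b_i$ as $a_i-a_{i-1}$ plus an extra summand $r$ precisely at the indices where $a_{i-1}>a_i$, the telescoping part sums to $a_n-a_0=a_n$, using the convention $a_0=0$. It then remains to identify the number of indices $i\in\{1,\dots,n\}$ with $a_{i-1}>a_i$ with $\des(a)$: since $a_0=0\le a_1$, the index $i=1$ is never such a descent, so these indices are exactly the positions $j\in\{1,\dots,n-1\}$ with $a_j>a_{j+1}$, which is $\des(a)$ by definition. Hence $\sum_i b_i = a_n + r\,\des(a) = \fdes(a)$, as wanted.

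With the identity established, the equivalence is immediate: $a\in\mathcal{F}^{(r)}_{n,k}$ means $k-1\le\fdes(a)<k$, which is the same as $k-1\le\sum_i b_i<k$, that is, $b\in\mathcal{A}^{(r)}_{n,k}$. As $a$ ranges over all of $[0,r)^n$ and $\phi$ is a bijection, this yields $\phi(\mathcal{F}^{(r)}_{n,k})=\mathcal{A}^{(r)}_{n,k}$.

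The only genuinely delicate point, and the place where I expect a careless argument to slip, is the bookkeeping of descents: one must use the convention $a_0=0$ to guarantee that the first position never contributes to the count, so that the indices appearing in the $\phi$-sum match the descent set of $a$ exactly and no spurious boundary term is introduced. Everything else is a telescoping sum and a rewriting of the threshold inequalities.
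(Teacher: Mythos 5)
Your proof is correct and follows essentially the same route as the paper: the paper's proof also rests on the single identity $\sum_i b_i = r\,\des(a) + a_n = \fdes(a)$, obtained from the definition of $\phi$, from which the set equality follows since $\phi$ is a bijection of $[0,r)^n$. You have merely made explicit the telescoping and the role of the convention $a_0=0$, which the paper leaves to the reader.
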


\begin{proof}
From \eqref{defphi} and keeping the notation we get
\[
  \sum_{i=1}^n b_i = r.\des(a_1,\dots,a_n) + a_n = \fdes(a_1,\dots,a_n)
\]
and the result follows.
\end{proof}

\begin{lem} \label{lem2}
$E^*( \phi( \mathcal{T}_{(\sigma,c)} ) , z ) = z^{\des(\sigma^{-1})+1}$.
\end{lem}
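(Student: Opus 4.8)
The plan is to show that $\phi$ restricted to the single piece $\mathcal{T}_{(\sigma,c)}$ coincides with one affine map of the form $v\mapsto Lv+d$, where $L$ is a fixed unimodular integer matrix and $d\in\mathbb{Z}^n$, and then to invoke the invariance of the Ehrhart series under lattice-preserving unimodular affine maps together with Lemma~\ref{ehrtsigma}.

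First I would write a generic point of $\mathcal{T}_{(\sigma,c)}$ as $a=c+v$ with $v\in\mathcal{T}_\sigma$, so that $a_i=c_i+v_i$ with $c_i\in\{0,\dots,r-1\}$ and $v_i\in[0,1)$. The key observation is that each sign test $[a_{i-1}>a_i]$ appearing in~\eqref{defphi} is constant on $\mathcal{T}_{(\sigma,c)}$. Indeed, if $c_{i-1}>c_i$ then $a_{i-1}\ge c_{i-1}>c_i+v_i=a_i$ always; if $c_{i-1}<c_i$ then $a_{i-1}<c_{i-1}+1\le c_i\le a_i$ always; and if $c_{i-1}=c_i$ then $[a_{i-1}>a_i]=[v_{i-1}>v_i]=[\sigma_{i-1}>\sigma_i]$ by the defining property of $\std$ on $\mathcal{T}_\sigma$. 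Consequently, on this piece each coordinate from~\eqref{defphi} is given by the single formula $b_i=(v_i-v_{i-1})+d_i$ (with the conventions $v_0=0$, $a_0=0$, so that $b_1=v_1+c_1$), where $d_i=c_i-c_{i-1}+r[a_{i-1}>a_i]$ is an integer. In other words $\phi(a)=Lv+d$, where $(Lv)_i=v_i-v_{i-1}$ and $d=(d_i)\in\mathbb{Z}^n$.

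Next I would observe that $L$ is the finite-difference operator, a lower-bidiagonal integer matrix with $1$'s on the diagonal; it is therefore unimodular, with integer inverse given by the partial-sum operator. Hence $\phi(\mathcal{T}_{(\sigma,c)})=L(\mathcal{T}_\sigma)+d$ is an integer translate of the image of $\mathcal{T}_\sigma$ under a lattice-preserving linear bijection. Since both an integer translation and a unimodular linear map preserve $\mathbb{Z}^n$ and commute with dilation by $t$, they leave the counting function $t\mapsto\#(t\mathcal{X}\cap\mathbb{Z}^n)$ unchanged, and so leave $E^*$ unchanged. Thus $E^*(\phi(\mathcal{T}_{(\sigma,c)}),z)=E^*(\mathcal{T}_\sigma,z)$, and Lemma~\ref{ehrtsigma} gives the stated value $z^{\des(\sigma^{-1})+1}$.

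The main obstacle is the constancy of the sign tests, that is, verifying that $\mathcal{T}_{(\sigma,c)}$ lies entirely within a single linearity region of $\phi$; this is exactly the case analysis above, and the only delicate point is the tie case $c_{i-1}=c_i$, where one must use that $\mathcal{T}_\sigma$ is arranged so that the weak and strict inequalities among the $v_i$ are dictated precisely by the one-line comparisons of $\sigma$. Once that is established, the remaining steps are formal properties of the Ehrhart series under unimodular affine changes of lattice coordinates.
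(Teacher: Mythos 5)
Your proof is correct and follows essentially the same route as the paper's own argument: observe that the sign tests in~\eqref{defphi} are constant on $\mathcal{T}_{(\sigma,c)}$, so $\phi$ restricts there to a lattice-preserving affine map, then invoke invariance of the Ehrhart series and Lemma~\ref{ehrtsigma}. The only difference is that you spell out the details (the case analysis on colors and the unimodularity of the difference operator) that the paper leaves implicit.
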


\begin{proof}
Let $v\in\mathcal{T}_{(\sigma,c)}$, then the condition $v_i\leq v_{i+1}$ or $v_i > v_{i+1}$ only depends on $(\sigma,c)$.
So from its definition in \eqref{defphi}, we see that the restriction of $\phi$ to $\mathcal{T}_{(\sigma,c)}$ is equal 
to an affine map that sends $\mathbb{Z}^n$ to itself. It follows that 
$\phi(\mathcal{T}_{(\sigma,c)})$ has the same Ehrhart series as $\mathcal{T}_{(\sigma,c)}$.
Besides, since $\mathcal{T}_{(\sigma,c)}$ is a translation of $\mathcal{T}_{\sigma}$ by an integer vector,
they have the same Ehrhart series, which is therefore $z^{\des(\sigma^{-1})+1}$ by Lemma~\ref{ehrtsigma}.
\end{proof}

\begin{prop} \label{ehrsA}
\[
   E^*(\mathcal{A}^{(r)}_{n,k} , z ) 
   =  \sum_{\substack{(\sigma,c) \in \mathfrak{S}_n^{(r)} \\ \fdes(\sigma,c) = k-1  }}  z^{\des(\sigma^{-1})+1}.
\]
\end{prop}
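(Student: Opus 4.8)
The plan is to decompose $\mathcal{A}^{(r)}_{n,k}$ as a disjoint union of pieces indexed by colored permutations $(\sigma,c)$ with $\fdes(\sigma,c)=k-1$, and then apply additivity of the Ehrhart series together with Lemma~\ref{lem2}. The backbone of the argument is the observation that the translated simplices $\mathcal{T}_{(\sigma,c)}$ are exactly the fibers of the colored standardization map $\cstd$, so they partition the half-open hypercube $[0,r)^n$. Indeed, writing $v=c+u$ with $u\in[0,1)^n$ gives $\lfloor v_i\rfloor=c_i$ and $v_i\bmod 1=u_i$, so $\cstd(v)=(\sigma,c)$ holds precisely when $u\in\mathcal{T}_\sigma$, i.e.\ when $v\in\mathcal{T}_{(\sigma,c)}$.

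First I would rewrite the defining condition of $\mathcal{F}^{(r)}_{n,k}$ in combinatorial terms. Since $k$ is an integer, the condition $k-1\leq\fdes(v)<k$ is equivalent to $\lfloor\fdes(v)\rfloor=k-1$, and by Lemma~\ref{lem1} this is in turn equivalent to $\fdes(\cstd(v))=k-1$. Combining this with the partition by fibers of $\cstd$ described above yields
\[
  \mathcal{F}^{(r)}_{n,k} = \bigsqcup_{\substack{(\sigma,c)\in\mathfrak{S}_n^{(r)} \\ \fdes(\sigma,c)=k-1}} \mathcal{T}_{(\sigma,c)}.
\]

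Next I would apply the bijection $\phi$. By Lemma~\ref{lem1b} we have $\mathcal{A}^{(r)}_{n,k}=\phi(\mathcal{F}^{(r)}_{n,k})$, and since $\phi$ is a bijection of $[0,r)^n$ onto itself it carries the disjoint union above to a disjoint union of the sets $\phi(\mathcal{T}_{(\sigma,c)})$. As the Ehrhart series is additive over disjoint unions (it is built from the lattice-point counting function $E$, which is additive), I obtain
\[
  E^*(\mathcal{A}^{(r)}_{n,k},z) = \sum_{\substack{(\sigma,c)\in\mathfrak{S}_n^{(r)} \\ \fdes(\sigma,c)=k-1}} E^*\big(\phi(\mathcal{T}_{(\sigma,c)}),z\big),
\]
and each summand equals $z^{\des(\sigma^{-1})+1}$ by Lemma~\ref{lem2}, which is exactly the claimed formula.

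The main obstacle is not any single computation but the careful bookkeeping that ties the three preparatory lemmas together: one must check that the fibers of $\cstd$ really are the $\mathcal{T}_{(\sigma,c)}$ (so that they genuinely partition $[0,r)^n$ and, after cutting by the flag-descent condition, partition $\mathcal{F}^{(r)}_{n,k}$), and that the half-open boundaries are handled consistently so that the strict/weak inequalities in the definitions of $\mathcal{F}^{(r)}_{n,k}$ and $\mathcal{T}_\sigma$ match up with no double-counting. Once the equivalence $\lfloor\fdes(v)\rfloor=k-1\iff\fdes(\cstd(v))=k-1$ is invoked via Lemma~\ref{lem1}, the remainder is a formal application of the bijection $\phi$ and additivity of $E^*$.
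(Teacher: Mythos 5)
Your proposal is correct and follows exactly the paper's own argument: decompose $\mathcal{F}^{(r)}_{n,k}$ into the simplices $\mathcal{T}_{(\sigma,c)}$ via Lemma~\ref{lem1}, transport this decomposition to $\mathcal{A}^{(r)}_{n,k}$ using Lemma~\ref{lem1b} and the bijectivity of $\phi$, and conclude by additivity of the Ehrhart series together with Lemma~\ref{lem2}. The only difference is that you spell out details the paper leaves implicit (that the fibers of $\cstd$ are precisely the $\mathcal{T}_{(\sigma,c)}$, and that $k-1\leq\fdes(v)<k$ means $\lfloor\fdes(v)\rfloor=k-1$), which is fine.
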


\begin{proof}
Using Lemma~\ref{lem1}, we get:
\[
  \mathcal{F}^{(r)}_{n,k} = \biguplus_{\substack{ (\sigma,c) \in \mathfrak{S}^{(r)}_n \\ \fdes(\sigma,c)=k-1  }} \mathcal{T}_{(\sigma,c)}.
\]
From Lemma~\ref{lem1b} and the fact that $\phi$ is a bijection, we have:
\[
    \mathcal{A}^{(r)}_{n,k} = \phi(\mathcal{F}^{(r)}_{n,k}) 
  = \biguplus_{\substack{ (\sigma,c) \in \mathfrak{S}^{(r)}_n \\ \fdes(\sigma,c)=k-1  }}  \phi( \mathcal{T}_{(\sigma,c)} ).
\]
From Lemma~\ref{lem2} and the fact that the Ehrhart series is additive with respect to disjoint union, we get the result.
\end{proof}

\section{From the half-open hypercube to the closed hypercube}

\label{sec:inclexcl}

\begin{defi}
The {\it multi-hypersimplices} are the polytopes defined by:
\[
  \mathcal{B}^{(r)}_{n,k} :=  
     \begin{cases} 
 \big\{ v\in [0,r]^n \, : \, k-1 \leq \sum v_i < k       \big\}  & \text{ if } 1 \leq k < rn, \\[3mm]
 \big\{ v\in [0,r]^n \, : \, k-1 \leq \sum v_i \leq k    \big\}  & \text{ if } k=rn.
     \end{cases}
\]
\end{defi}

These polytopes form a particular class of the ones introduced by Lam and Postnikov~\cite{lam} under the same name.
The polytopes $\mathcal{B}^{(1)}_{n,k}$ are simply called the hypersimplices, and they can be described
geometrically as truncated simplices (this fact is essentially due to Coxeter \cite[Section 8.7]{coxeter}).

Note that 
\[
  [0,r]^n = \biguplus_{ 1 \leq k \leq rn } \mathcal{B}^{(r)}_{n,k}.
\]


\begin{prop} \label{relab}
Let $B^{(r)}_{n,k}(z) = E^*( \mathcal{B}^{(r)}_{n,k} , z ) $ and $A^{(r)}_{n,k}(z) = E^*( \mathcal{A}^{(r)}_{n,k} , z ) $,
with the convention that $A_{n,0}^{(r)}(z) = B_{n,0}^{(r)}(z) = \delta_{n0}$
and $A_{n,k}^{(r)}(z) = B_{n,k}^{(r)}(z) = 0$ if $k<0$ or $k>rn$. Then:
\[
  B^{(r)}_{n,k}(z) = \sum_{j=0}^n \binom nj  (1-z)^j A^{(r)}_{n-j,k-rj}(z).
\]
\end{prop}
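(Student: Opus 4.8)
The plan is to prove the identity first at the level of Ehrhart polynomials and then transfer it to Ehrhart series. The geometric input is the partition of the closed hypercube according to which coordinates attain the maximal value $r$:
\[
  [0,r]^n = \biguplus_{S\subseteq\{1,\dots,n\}} \Bigl(\prod_{i\in S}\{r\}\times\prod_{i\notin S}[0,r)\Bigr),
\]
where each $v\in[0,r]^n$ is assigned the unique set $S=\{i:v_i=r\}$. Intersecting this partition with $\mathcal B^{(r)}_{n,k}$ and writing $|S|=j$, the coordinates in $S$ contribute exactly $rj$ to the coordinate sum, while the remaining $n-j$ coordinates range over $[0,r)^{n-j}$ subject to the shifted constraint $k-1-rj\le\sum_{i\notin S}v_i<k-rj$. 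After forgetting the fixed coordinates, this piece is precisely $\mathcal A^{(r)}_{n-j,\,k-rj}$ inside $\mathbb R^{n-j}$.

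First I would check that the partition is compatible with lattice-point counting. A lattice point of $t\mathcal B^{(r)}_{n,k}$ lies in the dilated piece indexed by $S$ exactly when its coordinates in $S$ all equal the integer $rt$ and its remaining coordinates form a lattice point of $t\mathcal A^{(r)}_{n-j,\,k-rj}$; the fixed coordinates carry no extra freedom, so the two counts agree. Since there are $\binom nj$ sets of size $j$ and the union is disjoint, additivity of the lattice-point count gives
\[
  E(\mathcal B^{(r)}_{n,k},t)=\sum_{j=0}^n\binom nj\,E\bigl(\mathcal A^{(r)}_{n-j,\,k-rj},t\bigr)
\]
for every integer $t>0$. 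I would dispose of the top case $k=rn$ with the same partition: for any $S$ with $j<n$, each of the $n-j$ free coordinates is strictly below $r$, so their sum can never reach $r(n-j)$, and the closed constraint $\sum v_i\le k$ agrees on that piece with the half-open one; the single remaining piece $S=\{1,\dots,n\}$ reduces to the lattice point $(rt,\dots,rt)$, matching the convention $A^{(r)}_{0,0}(z)=1$.

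Then I would pass to Ehrhart series. Multiplying the polynomial identity by $(1-z)^{n+1}$ and summing over $t\ge0$ turns the left-hand side into $B^{(r)}_{n,k}(z)$ by \eqref{defehr2}. Each summand on the right involves $\mathcal A^{(r)}_{n-j,\,k-rj}\subset\mathbb R^{n-j}$, whose Ehrhart series is normalized by the factor $(1-z)^{(n-j)+1}$, so
\[
  (1-z)^{n+1}\sum_{t\ge0}E\bigl(\mathcal A^{(r)}_{n-j,\,k-rj},t\bigr)z^t
  =(1-z)^{\,n+1-(n-j+1)}A^{(r)}_{n-j,\,k-rj}(z)=(1-z)^j A^{(r)}_{n-j,\,k-rj}(z),
\]
and these assemble into the claimed formula.

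The main obstacle is not the algebra but the bookkeeping at the two boundaries. I would verify that the half-open constraint on the free coordinates is the correct reduction of both the strict constraint (for $k<rn$) and the closed constraint (for $k=rn$), and that every term produced by the conventions $A^{(r)}_{n,0}(z)=\delta_{n0}$ and $A^{(r)}_{n,k}(z)=0$ outside the admissible range corresponds to an empty or zero-dimensional piece: when $k-rj<0$ the interval $[k-1-rj,k-rj)$ contains no nonnegative-sum points, and when $k-rj=0$ with $n-j>0$ the interval $[-1,0)$ is likewise vacuous. Once these edge cases are confirmed, the identity follows with no inclusion-exclusion signs — only the binomial choice of the maximal-coordinate set $S$ and the dimension-drop factors $(1-z)^j$.
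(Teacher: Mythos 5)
Your proof is correct and follows essentially the same route as the paper: you partition $\mathcal{B}^{(r)}_{n,k}$ according to the set of coordinates equal to $r$, identify each piece (after deleting those coordinates) with $\mathcal{A}^{(r)}_{n-j,k-rj}$ in a way that respects lattice points under dilation, and obtain the factor $(1-z)^j$ from the mismatch between the normalizations $(1-z)^{n+1}$ and $(1-z)^{n-j+1}$. The only difference is presentational — you pass through the Ehrhart polynomial identity and treat the boundary cases ($k=rn$, $k-rj\leq 0$) explicitly, which the paper compresses into a one-line remark about the bijection preserving integral points and the emptiness convention.
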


\begin{proof} 
For each $\Delta \subset \{1, \dots, n\}$, let 
$ \mathcal{H}_\Delta =  \{ v\in \mathcal{B}^{(r)}_{n,k} \, : \, v_i=r \text{ iff } i \in \Delta \} $.
The sets $\mathcal{H}_\Delta$ form a partition of $\mathcal{B}^{(r)}_{n,k}$, so that
\[
  B^{(r)}_{n,k}(z) = \sum_{\Delta \subset \{1,\dots,n\}}  E^*( \mathcal{H}_\Delta , z ).
\]
By removing the coordinates equal to $r$, we see that
$\mathcal{H}_\Delta$ is in bijection with $\mathcal{A}^{(r)}_{n-j,k-rj}$ where $j=\#\Delta$. 
The bijection preserves integral points, and this holds with the convention that both polytopes
are empty if $k-rj<0$. Hence:
\[
  E^*(  \mathcal{H}_\Delta , z ) = (1-z)^j A^{(r)}_{n-j,k-rj}(z).
\]
\end{proof}


The previous proposition is conveniently rewritten in terms of the generating functions.
Let
\[
  A^{(r)}(x,y,z) = \sum_{n\geq 0} \left( \sum_{k=0}^{rn} A^{(r)}_{n,k}(z) y^k \right) \frac{x^n}{n!},
  \quad
  B^{(r)}(x,y,z) = \sum_{n\geq 0} \left( \sum_{k=0}^{rn} B^{(r)}_{n,k}(z) y^k \right) \frac{x^n}{n!},
\]
then these two series are related as stated below.

\begin{theo} \label{relAB}
The following identity holds:
\[ B^{(r)}(x,y,z) = e^{(1-z)y^rx} A^{(r)}(x,y,z). \]
\end{theo}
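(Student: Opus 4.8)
The plan is to recognize Proposition~\ref{relab} as the coefficient-level statement of a product of generating functions, and to simply translate the binomial convolution into a product via the standard exponential-generating-function dictionary. The identity
\[
  B^{(r)}_{n,k}(z) = \sum_{j=0}^n \binom nj (1-z)^j A^{(r)}_{n-j,k-rj}(z)
\]
has the shape of a binomial convolution in the index $n$, and the shift $k \mapsto k-rj$ in the second index is exactly what a factor $y^{rj}$ contributes when we track the variable $y$. So the expected factor $e^{(1-z)y^r x}$ is the EGF whose coefficient of $x^j/j!$ is $(1-z)^j y^{rj}$, and multiplying by $A^{(r)}(x,y,z)$ should reproduce $B^{(r)}(x,y,z)$ term by term.

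First I would write out the right-hand side $e^{(1-z)y^r x} A^{(r)}(x,y,z)$ by expanding the exponential as $\sum_{j\geq 0} (1-z)^j y^{rj} x^j / j!$ and the series $A^{(r)}$ in its definition. The key step is to collect the coefficient of $x^n/n!$ in the Cauchy product of these two EGFs: this produces $\sum_{j=0}^n \binom nj (1-z)^j y^{rj} \big(\sum_{k} A^{(r)}_{n-j,k}(z) y^k\big)$. Then I would reindex the inner sum, replacing $k$ by $k-rj$ so that the overall power of $y$ becomes $y^k$; the factor $y^{rj}$ from the exponential combines with $y^{k-rj}$ from $A^{(r)}_{n-j,k-rj}(z)$ to give precisely $y^k$. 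Extracting the coefficient of $y^k$ on both sides then yields exactly the formula of Proposition~\ref{relab}, which establishes the theorem. I would make sure the conventions $A^{(r)}_{n,0}(z)=\delta_{n0}$ and $A^{(r)}_{n,k}(z)=0$ for $k<0$ or $k>rn$ are invoked so that the reindexed sum ranges correctly and the boundary terms match.

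The only point requiring a little care, rather than a genuine obstacle, is the bookkeeping of the two summation indices $n$ and $k$ simultaneously under the Cauchy product: one must verify that the shift $k \mapsto k-rj$ interacts correctly with the range $0 \leq k \leq rn$, and that the vanishing conventions guarantee no spurious or missing terms appear at the edges (for instance when $k-rj<0$). Since this is purely the standard equivalence between a binomial convolution with an index shift and multiplication of exponential generating functions, there is no substantive difficulty beyond confirming that the conventions stated in Proposition~\ref{relab} make the formal manipulation valid for all $n$ and $k$.
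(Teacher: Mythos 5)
Your proposal is correct and is essentially the same argument as the paper's: the paper also proves Theorem~\ref{relAB} by multiplying the identity of Proposition~\ref{relab} by $y^k$, summing over $k$, and recognizing the binomial convolution with the factored term $((1-z)y^r)^j$ as the Cauchy product $e^{(1-z)y^rx}A^{(r)}(x,y,z)$, relying on the same vanishing conventions to handle the index shift $k\mapsto k-rj$. The only difference is cosmetic: you expand the product and match coefficients, while the paper resums the convolution, which are the two directions of the same standard EGF dictionary.
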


\begin{proof}
From Proposition~\ref{relab}, we get:
\[
  \sum_{k \geq 0} y^k B^{(r)}_{n,k}(z) = \sum_{j=0}^n \binom nj ((1-z)y^r)^j \sum_{k \geq 0}  y^{k-rj} A^{(r)}_{n-j,k-rj}(z)
\]
and the result follows.
\end{proof}

Together with Proposition~\ref{ehrsA}, the relation in the previous theorem shows that 
a generalization of Stanley's conjecture can be obtained via an identity on generating functions. 
This identity will be presented in the next sections. 
We first need some definitions to state the result.

\begin{defi} [\cite{bagno2,foatahan}]
The {\it flag excedence number} of a colored permutation is:
\[
  {\rm fexc}(\sigma,c) := r.\#\big\{ i \in\{1,\dots,n\} \, : \, \sigma_i>i\text{ and } c_i=0 \big\} + \sum_{i=1}^{n} c_i.
\]
We also need another definition of flag descents, which is the one originally due to Foata and Han \cite{foatahan}:
\[
  \fdes^*(\sigma,c) := r . \des^* (\sigma,c) + c_1
\]
where 
\[
  \des^*(\sigma,c) := \# \{ i \, : \, c_i < c_{i+1}, \text{ or } c_i=c_{i+1} \text{ and } \sigma_i>\sigma_{i+1} \}.
\] 
\end{defi}

In particular, let us mention that the statistics $\fexc$ and $\fdes^*$ are equidistributed on $\mathfrak{S}_n^{(r)}$,
see \cite[Theorem~1.4]{foatahan}. We will also give a proof that $\fdes$ and $\fdes^*$ are equidistributed in the next
section. 

\begin{theo} \label{theobnr}
\begin{equation} \label{eq:theobnr}
   B^{(r)}_{n,k}(z) = 
   \sum_{ \substack{ (\sigma,c) \in \mathfrak{S}^{(r)}_n \\ \fexc(\sigma,c) = rn-k   }}  z^{ \lceil \fdes^*(\sigma,c) /r \rceil }.
\end{equation}
\end{theo}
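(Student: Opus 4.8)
The plan is to combine the two results already in hand and reduce the statement to a single identity between generating functions for colored permutations. Proposition~\ref{ehrsA} computes the Ehrhart series $A^{(r)}_{n,k}(z)$ in terms of the pair of statistics $\fdes$ and $\des(\sigma^{-1})$, while Theorem~\ref{relAB} expresses $B^{(r)}(x,y,z)$ as $e^{(1-z)y^rx}A^{(r)}(x,y,z)$. First I would rewrite Proposition~\ref{ehrsA} at the level of generating functions: summing over $k$ and substituting $k\mapsto\fdes(\sigma,c)+1$ gives
\[
A^{(r)}(x,y,z) = \sum_{n\geq 0}\frac{x^n}{n!}\sum_{(\sigma,c)\in\mathfrak{S}_n^{(r)}} z^{\des(\sigma^{-1})+1}\, y^{\fdes(\sigma,c)+1}.
\]
Applying Theorem~\ref{relAB} then yields a closed expression for $B^{(r)}(x,y,z)$; extracting the coefficient of $x^ny^k/n!$ merely reproduces Proposition~\ref{relab}, so this reformulation is faithful and the theorem becomes an assertion about the colored-permutation generating function on the right-hand side of \eqref{eq:theobnr}.

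After this reduction, proving the theorem is equivalent to establishing the identity
\[
\begin{aligned}
&e^{(1-z)y^rx}\sum_{n\geq 0}\frac{x^n}{n!}\sum_{(\sigma,c)\in\mathfrak{S}_n^{(r)}} z^{\des(\sigma^{-1})+1}\, y^{\fdes(\sigma,c)+1} \\
&\qquad= \sum_{n\geq 0}\frac{x^n}{n!}\sum_{(\sigma,c)\in\mathfrak{S}_n^{(r)}} z^{\lceil\fdes^*(\sigma,c)/r\rceil}\, y^{rn-\fexc(\sigma,c)},
\end{aligned}
\]
since comparing coefficients of $y^k$ gives exactly \eqref{eq:theobnr}. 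A first convenient observation is that, because $\fdes^*=r\,\des^*+c_1$ with $0\leq c_1\leq r-1$, the exponent of $z$ on the right simplifies to $\lceil\fdes^*(\sigma,c)/r\rceil=\des^*(\sigma,c)+[\,c_1>0\,]$, so that both sides carry genuine descent-type $z$-gradings. The content of the identity is then that the joint distribution of $(\fdes,\des(\sigma^{-1}))$, once twisted by the factor $e^{(1-z)y^rx}$, coincides with that of $(\fexc,\fdes^*)$.

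To prove this I would draw on the combinatorial machinery developed in Sections~\ref{sec:chro}, \ref{sec:genfunc} and~\ref{sec:bij}, together with the two equidistribution facts already cited: $\fexc$ and $\fdes^*$ are equidistributed by Foata and Han \cite[Theorem~1.4]{foatahan}, and $\fdes$ and $\fdes^*$ are equidistributed as proved in the next section. The exponential factor is precisely the trace, on the combinatorial side, of the inclusion-exclusion of Proposition~\ref{relab} that passes from the half-open hypercube to the closed one by accounting for coordinates equal to $r$. The hard part will be that the univariate equidistributions are not enough by themselves: one must upgrade them to a joint statement that simultaneously matches the $y$-grading ($\fdes$ against $rn-\fexc$) and the $z$-grading ($\des(\sigma^{-1})$ against $\fdes^*$), and reconciles the shift in $n$ produced by the exponential. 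Once such a bivariate refinement is available, the identity follows by comparison of coefficients, and the specialization $r=1$ recovers Stanley's conjecture.
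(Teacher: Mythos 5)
Your reduction is correct and is exactly the strategy of the paper: Proposition~\ref{ehrsA} turns $A^{(r)}(x,y,z)$ into the colored-permutation generating function for the pair $(\fdes,\des(\sigma^{-1}))$, Theorem~\ref{relAB} gives $B^{(r)}(x,y,z)=e^{(1-z)y^rx}A^{(r)}(x,y,z)$, and the theorem is thereby equivalent to the generating-function identity
\[
  e^{(1-z)y^rx}A^{(r)}(x,y,z) \;=\; C^{(r)}(x,y,z),
\]
where $C^{(r)}$ denotes the exponential generating function of the right-hand side of \eqref{eq:theobnr}. This identity is precisely Theorem~\ref{relAC} of the paper, and your simplification $\lceil\fdes^*(\sigma,c)/r\rceil=\des^*(\sigma,c)+[\,c_1>0\,]$ is also correct.

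The genuine gap is that you never prove this identity: you only observe that the univariate equidistributions you cite ($\fexc\sim\fdes^*$ from Foata--Han, and $\fdes\sim\fdes^*$ from Section~\ref{sec:chro}) are insufficient, and that one must ``upgrade them to a joint statement'' matching both gradings simultaneously. Naming that difficulty is not resolving it, and this joint statement is the entire mathematical content of the theorem. The paper resolves it by a concrete computation: it starts from the explicit formula of Foata--Han (\cite[Theorem~5.11]{foatahan} at $q=1$, Equation~\eqref{formulawn}) for the joint generating function $\sum y^{\fexc}z^{\fdes^*}$; it applies the linear operator $\beta(z^k)=z^{\lceil k/r\rceil}$, which acts cleanly there because $\beta(z^k-z^{k+1})$ vanishes unless $r$ divides $k$; it derives closed ordinary generating functions for both sides (Theorems~\ref{ogfC} and~\ref{ogfA}, the latter resting on the chromatic-descent bijection of Section~\ref{sec:chro} together with RSK and the Cauchy identity, i.e.\ Equation~\eqref{formuleanr}); and it finally compares the two exponential generating functions via the Laplace transform, under which multiplication by $e^{(1-z)y^rx}$ becomes the shift $s\mapsto s-(1-z)y^r$. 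None of these steps appears in your proposal, so as written it establishes only the (correct) reduction, not the theorem itself.
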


\begin{proof}
Let $C_{n,k}^{(r)}(z)$ denote the right-hand side of the equation, and we use the same convention
as with $B_{n,k}^{(r)}(z)$ when $k\leq0$. Let also $C_n^{(r)}(y,z) = \sum_{k=0}^{rn} C^{(r)}_{n,k}(z) y^k $, and
\[
  C^{(r)}(x,y,z) = \sum_{n\geq 0} C_n^{(r)}(y,z) \frac{x^n}{n!}.
\]
From Proposition~\ref{ehrsA} and Theorem~\ref{relAC} in the sequel, we have
\[
  C^{(r)}(x,y,z) = e^{(1-z)y^rx} A^{(r)}(x,y,z). 
\]
Comparing with Theorem~\ref{relAB} shows that we have 
$B^{(r)}(x,y,z) = C^{(r)}(x,y,z)$, which proves the theorem.
\end{proof}

We have in fact another result, which is not trivially equivalent to the previous one:

\begin{theo} \label{theobnr2}
\[
   B^{(r)}_{n,k}(z) = 
   \sum_{ \substack{ (\sigma,c) \in \mathfrak{S}^{(r)}_n \\ \fexc(\sigma,c) = rn-k   }}  z^{ \lceil \fdes(\sigma,c) /r \rceil }.
\]
\end{theo}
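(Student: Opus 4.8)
The plan is to deduce Theorem~\ref{theobnr2} from the already established Theorem~\ref{theobnr} by proving an equidistribution statement on $\mathfrak{S}^{(r)}_n$. Since Theorem~\ref{theobnr} gives $B^{(r)}_{n,k}(z) = \sum_{\fexc(\sigma,c)=rn-k} z^{\lceil \fdes^*(\sigma,c)/r\rceil}$, the desired identity is equivalent to
\[
  \sum_{\substack{(\sigma,c)\in\mathfrak{S}^{(r)}_n \\ \fexc(\sigma,c)=rn-k}} z^{\lceil \fdes(\sigma,c)/r\rceil}
  = \sum_{\substack{(\sigma,c)\in\mathfrak{S}^{(r)}_n \\ \fexc(\sigma,c)=rn-k}} z^{\lceil \fdes^*(\sigma,c)/r\rceil}
\]
for every $n$ and $k$; that is, the pairs $(\fexc,\lceil\fdes/r\rceil)$ and $(\fexc,\lceil\fdes^*/r\rceil)$ should be equidistributed. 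First I would record the elementary simplification $\lceil\fdes(\sigma,c)/r\rceil = \des(\sigma,c)+[c_n>0]$ and, likewise, $\lceil\fdes^*(\sigma,c)/r\rceil = \des^*(\sigma,c)+[c_1>0]$, which follows from $0\le c_n,c_1\le r-1$; this makes the two target statistics fully explicit.

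To prove the equidistribution I would follow the exact template of the proof of Theorem~\ref{theobnr}. Writing $D^{(r)}_{n,k}(z)$ for the left-hand sum above, $D^{(r)}_n(y,z)=\sum_{k} D^{(r)}_{n,k}(z)y^k$ and $D^{(r)}(x,y,z)=\sum_{n\ge 0} D^{(r)}_n(y,z)x^n/n!$, the goal becomes the generating-function identity $D^{(r)}(x,y,z) = e^{(1-z)y^r x} A^{(r)}(x,y,z)$, the analogue for $\fdes$ of Theorem~\ref{relAC}. Combined with Proposition~\ref{ehrsA} and compared against Theorem~\ref{relAB}, this yields $B^{(r)}(x,y,z)=D^{(r)}(x,y,z)$ and hence the theorem coefficientwise. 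A cleaner, purely bijective route would be to establish directly the sharper joint equidistribution of $(\fexc,\fdes)$ and $(\fexc,\fdes^*)$ on $\mathfrak{S}^{(r)}_n$: summing $z^{\lceil m/r\rceil}$ over each fixed value $m$ of the second statistic then reduces it to the ceiling version we actually need. Either way, the work is deferred to the combinatorial results of the later sections.

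The step I expect to be the main obstacle is precisely this joint control of $\fexc$ with the $\fdes$-type statistic, and it is the reason the two theorems are ``not trivially equivalent''. The naive candidate bijection --- reverse the word and complement the values, $\sigma''_i = n+1-\sigma_{n+1-i}$ and $c''_i = c_{n+1-i}$ --- does carry $\fdes^*$ to $\fdes$ exactly (it swaps $c_1$ with $c_n$ and turns a $\des^*$ at position $j$ into a $\des$ at position $n-j$), but it sends the color-$0$ excedances $\{i:\sigma_i>i,\ c_i=0\}$ to the color-$0$ deficiencies $\{j:\sigma_j<j,\ c_j=0\}$ and therefore does not preserve $\fexc$. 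Thus any successful bijection must additionally repair the excedance count, for instance by a Foata-type rearrangement within each color class; producing such a map (or, equivalently, the generating-function identity above) is the genuine content of the proof.
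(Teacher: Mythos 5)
Your reduction is the right one, and it is exactly the paper's: granting Theorem~\ref{theobnr}, the statement is equivalent to the joint equidistribution of $(\fexc,\lceil\fdes/r\rceil)$ and $(\fexc,\lceil\fdes^*/r\rceil)$ on $\mathfrak{S}_n^{(r)}$, and your simplification $\lceil\fdes(\sigma,c)/r\rceil=\des(\sigma,c)+[c_n>0]$ is correct. The genuine gap is that you never prove this equidistribution: both of your routes end at the statement of a goal. The paper supplies the missing content with an explicit involution (Section~\ref{sec:bij}): factor $(\sigma_1,c_1)\cdots(\sigma_n,c_n)$ into maximal blocks of constant color, leave every zero-colored block in place, and reverse each maximal run of nonzero-colored blocks. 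Since the color-$0$ letters do not move and the multiset of colors is preserved, $\fexc$ is unchanged; a block-by-block bookkeeping argument then shows that $\lceil\fdes/r\rceil$ of the original equals $\lceil\fdes^*/r\rceil$ of the image. Your closing remark that one needs ``a Foata-type rearrangement within each color class'' gestures in this direction, but no map is produced, and constructing one satisfying exactly these two lemmas is the entire content of the proof.

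Moreover, your advertised ``cleaner, purely bijective route'' --- establishing the sharper joint equidistribution of $(\fexc,\fdes)$ and $(\fexc,\fdes^*)$ and then summing $z^{\lceil m/r\rceil}$ --- cannot be completed, because that sharper statement is false. The paper points this out immediately after Theorem~\ref{theobnr2}, and it already fails for $n=2$, $r=2$: the colored permutation $\sigma=21$, $c=(0,1)$ has $(\fexc,\fdes)=(3,1)$, whereas the unique element with $\fexc=3$ is that same one and it has $\fdes^*=2$, so the value $(3,1)$ is never attained by $(\fexc,\fdes^*)$. Only the ceiling statistics are equidistributed jointly with $\fexc$; this is precisely why the two theorems are ``not trivially equivalent'', and why any correct argument (including the paper's involution, which preserves the ceilings but not $\fdes$ and $\fdes^*$ themselves) must work at the level of $\lceil\cdot/r\rceil$ rather than of the flag statistics.
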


\begin{proof}
This is a consequence of the previous theorem, together with the bijection in Section~\ref{sec:bij}.
\end{proof}

In view of the previous two theorems, one can ask whether the pairs $(\fexc,\fdes)$ and $(\fexc,\fdes^*)$
are equidistributed. This is however not the case.

\section{Chromatic descents}

\label{sec:chro}


\begin{defi}
For a colored permutation $(\sigma,c)$ we define its {\it chromatic descent number} as
\[
  \cdes(\sigma,c) := \des(\sigma) + \sum_{i=1}^n c_i.
\]
\end{defi}

We show that it is equidistributed with the flag descent number, via a bijection $\alpha$.
Let $(\sigma,c)$ be a colored permutation. Let $\alpha(\sigma,c) = (\sigma,c')$ where
\[
   c'_i = \sum_{j=1}^i  c_j + \des(\sigma_1,\dots,\sigma_i) \mod r.
\]

\begin{prop}
$\fdes(\sigma,c') = \cdes(\sigma,c)$.
\end{prop}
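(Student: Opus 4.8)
The plan is to track the unreduced partial sums and show that the integer ``carries'' produced when they are reduced modulo $r$ are exactly the descents of $(\sigma,c')$. Concretely, set
\[
  S_i := \sum_{j=1}^i c_j, \qquad D_i := \des(\sigma_1,\dots,\sigma_i), \qquad T_i := S_i + D_i,
\]
so that by definition $c'_i = T_i \bmod r$. Writing the Euclidean division $T_i = q_i r + c'_i$ with $0 \le c'_i \le r-1$, I note two boundary facts immediately: first $q_1 = \lfloor (c_1)/r\rfloor = 0$ since $0 \le c_1 \le r-1$ and $D_1=0$; and second $T_n = S_n + D_n = \sum_{j=1}^n c_j + \des(\sigma) = \cdes(\sigma,c)$. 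Thus the entire statement reduces to identifying $q_n$ with $\des(\sigma,c')$, because then $\fdes(\sigma,c') = r\,\des(\sigma,c') + c'_n = q_n r + c'_n = T_n = \cdes(\sigma,c)$.

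Next I would reduce the problem to a local statement about consecutive indices. Letting $d_i \in \{0,1\}$ denote the indicator that position $i$ is a descent of $(\sigma,c')$ (i.e. $c'_i > c'_{i+1}$, or $c'_i = c'_{i+1}$ and $\sigma_i > \sigma_{i+1}$), telescoping gives $q_n = q_1 + \sum_{i=1}^{n-1}(q_{i+1}-q_i) = \sum_{i=1}^{n-1}(q_{i+1}-q_i)$. Since $\des(\sigma,c') = \sum_{i=1}^{n-1} d_i$, it is enough to prove the pointwise identity $q_{i+1}-q_i = d_i$ for every $i \in \{1,\dots,n-1\}$. The useful observation here is that the increment is small and explicit: writing $\varepsilon_i := 1$ if $\sigma_i > \sigma_{i+1}$ and $\varepsilon_i := 0$ otherwise, one has $T_{i+1}-T_i = c_{i+1} + \varepsilon_i =: \delta_i$, and since $0 \le c_{i+1} \le r-1$ and $\varepsilon_i \in \{0,1\}$, the increment satisfies $\delta_i \in \{0,1,\dots,r\}$.

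The main work, and the only genuinely delicate point, is the case analysis establishing $q_{i+1}-q_i = d_i$ from the carry in $c'_i + \delta_i = c'_{i+1} + r(q_{i+1}-q_i)$. Because $0 \le c'_i \le r-1$ and $0 \le \delta_i \le r$, the carry $q_{i+1}-q_i$ is either $0$ (exactly when $c'_i + \delta_i < r$) or $1$ (exactly when $c'_i + \delta_i \ge r$). I would then match each alternative to the descent condition: when $c'_i+\delta_i<r$ one gets $c'_{i+1}=c'_i+\delta_i\ge c'_i$, and checking the two subcases $\delta_i=0$ (forcing $\varepsilon_i=0$, hence no descent since $c'_{i+1}=c'_i$ and $\sigma_i<\sigma_{i+1}$) and $\delta_i\ge 1$ (giving $c'_{i+1}>c'_i$, again no descent) yields $d_i=0$; when $c'_i+\delta_i\ge r$ one gets $c'_{i+1}=c'_i+\delta_i-r\le c'_i$, and the subcases $\delta_i<r$ (giving $c'_{i+1}<c'_i$, a descent) and $\delta_i=r$ (forcing $c_{i+1}=r-1$, $\varepsilon_i=1$, so $c'_{i+1}=c'_i$ with $\sigma_i>\sigma_{i+1}$, again a descent) yield $d_i=1$. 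In every branch $q_{i+1}-q_i = d_i$, which completes the argument. The care needed is precisely in the boundary value $\delta_i=r$, where the color difference vanishes modulo $r$ yet the pair $(\sigma,c')$ still records a descent through the $\sigma$-comparison; this is exactly the mechanism that makes $\fdes$ correctly absorb the descents of $\sigma$ into the flag statistic.
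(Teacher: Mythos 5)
Your proof is correct and takes essentially the same approach as the paper: both track the partial sums $T_i=\sum_{j\le i}c_j+\des(\sigma_1,\dots,\sigma_i)$, identify the quotient of $T_n$ under division by $r$ with $\des(\sigma,c')$ by counting how often a multiple of $r$ is crossed, and handle the same delicate boundary case (increment exactly $r$, forcing $c_{i+1}=r-1$ and a $\sigma$-descent, so the $c'$-colors coincide but a descent is still recorded). Your telescoping of the carries $q_{i+1}-q_i$ is just a more careful write-up of the paper's crossing count $q=\#\{i:\exists k,\ w_{i-1}<kr\le w_i\}$.
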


\begin{proof}
Let $w_k = \sum_{j=1}^k c_j + \des(\sigma_1,\dots,\sigma_k)$ for $k=1,\dots,n$ so that $c'_i = w_i \mod r$.
Clearly, $w_1,\dots,w_n$ is a nondecreasing sequence and $w_n=\cdes(\sigma,c)$.
We can write $w_n=qr+c_n'$ for a unique $q$.
This integer $q$ counts the number of positive multiples of $r$ that are smaller than $w_n$.
Using the fact that $w_k-w_{k-1}\leq r$ and $w_1<r$, we have:
\[
   q = \#\{ i \, : \,  \exists k,   w_{i-1} < kr \leq w_i   \}.
\]
To count the cardinality of this set, we distinguish two cases.
If $w_{i-1}-w_i=r$, it means that $c_i=r-1$ and $\sigma_{i-1}>\sigma_i$. 
From the definition of the bijection, this is equivalent to $c'_i=c'_{i+1}$ and $\sigma_{i-1}>\sigma_i$.
Otherwise, $w_{i-1}-w_i<r$. We can see that this case is equivalent to $c_{i-1}>c_{i}$.
Hence, we obtain $q=\des(\sigma,c')$, and $w_n=\fdes(\sigma,c')$.
\end{proof}

It is also possible to define a bijection $\alpha^*$ by $\alpha^*(\sigma,c)=(\sigma,c'')$ where
\[
  c''_i = \sum_{j=i}^n c_j + \des(\sigma_i,\dots,\sigma_n) \mod r.
\]
As in the case of the previous proposition, we can prove $\fdes^*(\sigma,c'') = \cdes(\sigma,c)$.
In particular, it follows that $\fdes$ and $\fdes^*$ are equidistributed.

The bijection $\alpha$ only changes the colors $c_i$, and not the permutation $\sigma$, so we have:
\[
    \sum_{(\sigma,c) \in \mathfrak{S}_n^{(r)} } y^{ \fdes(\sigma,c) } z^{ \des(\sigma^{-1}) } 
  = \sum_{(\sigma,c) \in \mathfrak{S}_n^{(r)} } y^{ \cdes(\sigma,c) } z^{ \des(\sigma^{-1}) } .
\]
But the right-hand side clearly can be factorized, so that with the notation
\[
  A^{(r)}_n(y,z) = \sum_{ k=0 }^{ rn } y^k A^{(r)}_{n,k}(z),
\]
we have:
\begin{equation} \label{relara1}
  A^{(r)}_n(y,z) = \left( \frac{1-y^r}{1-y} \right)^n A^{(1)}_n(y,z).
\end{equation}

A formula for the case $r=1$ is given in the proposition below.
This is in fact a  particular case of a result of Garsia and Gessel \cite[Theorem~2.3]{garsia},
but we also include a short proof based on the Robinson-Schensted correspondence.

\begin{prop} For $r=1$, we have:
\[
  \frac{A^{(1)}_n(y,z)}{(1-y)^{n+1}(1-z)^{n+1}} = \sum_{i,j \geq 0} \binom{ij+n-1}{n} y^i z^j.
\]
\end{prop}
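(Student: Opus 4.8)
The plan is to prove the identity
\[
  \frac{A^{(1)}_n(y,z)}{(1-y)^{n+1}(1-z)^{n+1}} = \sum_{i,j \geq 0} \binom{ij+n-1}{n} y^i z^j
\]
by giving a combinatorial interpretation to the right-hand side and matching it against the bivariate generating function $A^{(1)}_n(y,z) = \sum_{\sigma} y^{\des(\sigma)} z^{\des(\sigma^{-1})}$. Since $A^{(1)}_n(y,z)$ is symmetric in the sense that it tracks descents of $\sigma$ and of $\sigma^{-1}$ simultaneously, the Robinson-Schensted correspondence is the natural tool: under $\sigma \mapsto (P,Q)$, descents of $\sigma$ become descents of $Q$ and descents of $\sigma^{-1}$ become descents of $P$, where $P,Q$ are standard Young tableaux of the same shape $\lambda$. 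Hence
\[
  A^{(1)}_n(y,z) = \sum_{\lambda \vdash n} \Big( \sum_{P \in SYT(\lambda)} y^{\des(P)} \Big) \Big( \sum_{Q \in SYT(\lambda)} z^{\des(Q)} \Big),
\]
which factors the descent statistics across the two tableaux.

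Next I would divide by $(1-y)^{n+1}(1-z)^{n+1}$ and invoke Proposition~\eqref{sytdes}, which computes the descent generating function over $SYT(\lambda)$ via the Schur-function specialization: $\sum_{T \in SYT(\lambda)} z^{\des(T)+1} = (1-z)^{n+1}\sum_{t\geq 0} s_\lambda(1^t) z^t$. Combining this for both the $y$-side and the $z$-side, the prefactors $(1-y)^{n+1}$ and $(1-z)^{n+1}$ cancel, and after accounting for the shift by one in the exponent (which contributes a factor $y^{-1}z^{-1}$, or equivalently a shift in the summation indices) I obtain
\[
  \frac{A^{(1)}_n(y,z)}{(1-y)^{n+1}(1-z)^{n+1}} = \sum_{i,j \geq 0} y^i z^j \sum_{\lambda \vdash n} s_\lambda(1^i)\, s_\lambda(1^j).
\]
The problem is thereby reduced to the purely symmetric-function identity $\sum_{\lambda \vdash n} s_\lambda(1^i)\, s_\lambda(1^j) = \binom{ij+n-1}{n}$.

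To finish, I would recognize the inner sum as a consequence of the Cauchy identity. Summing $s_\lambda(x)s_\lambda(y)$ over all partitions gives $\prod_{a,b}(1-x_a y_b)^{-1}$; specializing $i$ of the $x$-variables and $j$ of the $y$-variables to $1$ yields $(1-uv)^{-ij}$ as a generating function in auxiliary variables, whose degree-$n$ part is $\binom{ij+n-1}{n}$. Concretely, extracting the homogeneous degree-$n$ component of $\prod(1-x_a y_b)^{-1}$ under this specialization counts the monomials of degree $n$ in $ij$ variables, namely $\binom{ij+n-1}{n}$, and on the Schur side this degree-$n$ component is exactly $\sum_{\lambda \vdash n} s_\lambda(1^i) s_\lambda(1^j)$. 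This matches the right-hand side and completes the proof.

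The main obstacle I anticipate is the bookkeeping of the $+1$ shifts in the descent exponents coming from Proposition~\eqref{sytdes}: one must be careful that the factor $z^{\des(T)+1}$ (rather than $z^{\des(T)}$) on each side produces precisely the summation range $i,j \geq 0$ with no stray boundary terms, and that the empty-shape or low-index cases are handled consistently. Once the shift is tracked correctly, the Cauchy-identity step is standard, so the bulk of the care lies in aligning the RSK descent correspondence with the tableau generating functions and verifying the index conventions.
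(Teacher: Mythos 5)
Your approach is the same as the paper's: apply RSK to split $\des(\sigma)$ and $\des(\sigma^{-1})$ into descents of the two tableaux, invoke Equation~\eqref{sytdes} for each factor, and finish with the Cauchy identity specialized at $1^i$ and $1^j$. The Cauchy step and your displayed intermediate identity are exactly what the paper does. However, there is one genuine error in your setup, located precisely at the point you flag as delicate. The quantity in the statement is \emph{not} $\sum_\sigma y^{\des(\sigma)}z^{\des(\sigma^{-1})}$: from the definition $A^{(1)}_{n,k}(z)=E^*(\mathcal{A}^{(1)}_{n,k},z)$ together with Proposition~\ref{ehrsA}, one has, for $n\geq 1$,
\[
  A^{(1)}_n(y,z)=\sum_{\sigma\in\mathfrak{S}_n}y^{\des(\sigma)+1}z^{\des(\sigma^{-1})+1},
\]
with both exponents shifted. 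This is not an innocuous convention: with your unshifted definition the claimed identity is false. For $n=1$ it would read $\frac{1}{(1-y)^2(1-z)^2}=\sum_{i,j\geq 0} ij\, y^iz^j$, which is wrong, whereas the shifted version $\frac{yz}{(1-y)^2(1-z)^2}=\sum_{i,j\geq 0} ij\, y^iz^j$ is correct.

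Moreover, your proposed bookkeeping goes the wrong way: if you really start from the unshifted sum and correct by a factor $y^{-1}z^{-1}$, then carrying this through \eqref{sytdes} (using $s_\lambda(1^0)=0$ for $\lambda\vdash n$, $n\geq1$) yields
\[
  \frac{1}{(1-y)^{n+1}(1-z)^{n+1}}\sum_{\sigma\in\mathfrak{S}_n}y^{\des(\sigma)}z^{\des(\sigma^{-1})}
  =\sum_{i,j\geq 0}\binom{(i+1)(j+1)+n-1}{n}\,y^iz^j,
\]
which is not the identity you display next. The resolution is that no correction factor is needed at all: with the shifted expression for $A^{(1)}_n(y,z)$, the exponents $\des(P)+1$ and $\des(Q)+1$ coming from RSK match the left-hand side of \eqref{sytdes} verbatim, the prefactors $(1-y)^{n+1}(1-z)^{n+1}$ cancel, and your intermediate formula $\sum_{i,j\geq0}y^iz^j\sum_{\lambda\vdash n}s_\lambda(1^i)s_\lambda(1^j)$ follows at once; the Cauchy identity then finishes the proof exactly as you (and the paper) say. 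So your argument becomes correct, and coincides with the paper's, once you replace the opening identification of $A^{(1)}_n(y,z)$ by the shifted one and delete the $y^{-1}z^{-1}$ adjustment.
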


\begin{proof}
Let $\text{Par}(n)$ denote the set of integer partitions of $n$.
By the Robinson-Schensted correspondence, we have:
\begin{align*}
  A^{(1)}_n(y,z) &= \sum_{\sigma \in \mathfrak{S}_n } y^{\des(\sigma)+1} z^{\des(\sigma^{-1})+1} \\
                 &= \sum_{ \lambda \in {\rm Par}(n) } \sum_{P,Q \in SYT(\lambda)} y^{\des(P)+1} z^{\des(Q)+1}.
\end{align*}

So, using Equation~\eqref{sytdes}, we get:
\[
    \frac{A^{(1)}(y,z)}{ (1-y)^{n+1}(1-z)^{n+1} } 
  =  \sum_{ \lambda \in {\rm Par}(n) } \sum_{ s,t \geq 0 } s_{\lambda}(1^s) s_{\lambda}(1^t) y^s z^t.
\]
By the Cauchy identity on Schur functions, we have 
\[
    \sum_{ \lambda \in {\rm Par}(n) } s_\lambda(1^s) s_\lambda(1^t) = [x^n] \Big( \frac{1}{1-x} \Big)^{st}
  = \binom{st+n-1}{n}.
\]
This ends the proof.
\end{proof}


From Equation~\eqref{relara1} and the previous proposition, we deduce:

\begin{prop} 
\begin{equation} \label{formuleanr}
  \frac{A^{(r)}_n(y,z)}{ (1-y^r)^n (1-y) (1-z)^{n+1} } = \sum_{i,j \geq 0} \binom{ij+n-1}{n} y^i z^j.
\end{equation}
\end{prop}

%
%
%
Note that another consequence of Equation~\eqref{relara1}, together with Equation~\eqref{ideul}, is
the following (which is not a new result, see for example \cite{bagno1}).
\begin{prop}
\begin{equation}  \label{flagpol}
  \frac{A^{(r)}_n(y,1)}{ (1-y^r)^n (1-y) } = \sum_{i \geq 1} i^n y^i.
\end{equation}
\end{prop}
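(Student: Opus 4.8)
The plan is to specialize the key relation~\eqref{relara1} at $z=1$, thereby reducing the claim to the uncolored case, and then to invoke the classical identity~\eqref{ideul}. First I would set $z=1$ in Equation~\eqref{relara1}, which immediately gives
\[
  \frac{A^{(r)}_n(y,1)}{(1-y^r)^n(1-y)} = \frac{A^{(1)}_n(y,1)}{(1-y)^{n+1}}.
\]
Hence it suffices to treat the case $r=1$ and to show that the right-hand side equals $\sum_{i\ge 1} i^n y^i$.

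Next I would compute $A^{(1)}_n(y,1)$ explicitly. Combining the definition of $A^{(r)}_n(y,z)$ with Proposition~\ref{ehrsA}, and using that for $r=1$ the only coloring is trivial (so that $\fdes(\sigma,c)=\des(\sigma)$), one obtains
\[
  A^{(1)}_n(y,z) = \sum_{\sigma \in \mathfrak{S}_n} y^{\des(\sigma)+1}\, z^{\des(\sigma^{-1})+1}.
\]
Setting $z=1$ collapses the $z$-exponent and leaves
\[
  A^{(1)}_n(y,1) = y \sum_{\sigma \in \mathfrak{S}_n} y^{\des(\sigma)} = y\, A_n(y),
\]
a scalar multiple of the ordinary Eulerian polynomial.

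Finally I would substitute the classical identity~\eqref{ideul}, namely $A_n(y) = (1-y)^{n+1}\sum_{t\ge 0}(t+1)^n y^t$, to get
\[
  \frac{A^{(1)}_n(y,1)}{(1-y)^{n+1}} = y \sum_{t\ge 0}(t+1)^n y^t = \sum_{i\ge 1} i^n y^i,
\]
where the last step is the reindexing $i=t+1$. Chaining this with the first display completes the argument.

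The only point requiring care—and essentially the sole subtlety—is that one must \emph{not} attempt to read this specialization off the two-variable formula~\eqref{formuleanr}, whose denominator carries a factor $(1-z)^{n+1}$ that vanishes at $z=1$. Instead one specializes $A^{(1)}_n(y,z)$ at $z=1$ directly, as above, and only then divides by $(1-y)^{n+1}$. Everything else is a routine reindexing, so I would not expect any genuine obstacle.
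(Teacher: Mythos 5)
Your proof is correct and follows exactly the route the paper intends: the paper states this proposition as a consequence of Equation~\eqref{relara1} and Equation~\eqref{ideul}, and your argument (specializing \eqref{relara1} at $z=1$, identifying $A^{(1)}_n(y,1)=yA_n(y)$ via Proposition~\ref{ehrsA}, then applying \eqref{ideul}) is precisely the expansion of that remark, including the correct observation that one must avoid the singular specialization of \eqref{formuleanr}.
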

%
%
%

\section{Identities on bi-Eulerian generating functions}

\label{sec:genfunc}

We keep the definition of $A_n^{(r)}(y,z)$ and $A^{(r)}(x,y,z)$ as before,
but in this section we only need the formula in Equation~\eqref{formuleanr}.
We recall that $C_{n,k}^{(r)}(y,z)$, $C_n^{(r)}(y,z)$, and $C(x,y,z)$ were defined in 
the proof of Theorem~\ref{theobnr}.
The goal of this section is to prove the following relation
between the two generating functions for colored permutations:

\begin{theo} \label{relAC}
$ C^{(r)}(x,y,z) = e^{(1-z)y^rx} A^{(r)}(x,y,z)$.
\end{theo}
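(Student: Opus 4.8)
The plan is to establish the identity $C^{(r)}(x,y,z) = e^{(1-z)y^r x} A^{(r)}(x,y,z)$ by computing both sides explicitly as formal power series and checking they agree coefficient by coefficient. The key observation is that we already have a closed formula for $A^{(r)}_n(y,z)$ via Equation~\eqref{formuleanr}, so the left side is essentially known; the work lies in extracting a comparable formula for $C^{(r)}(x,y,z)$ from the combinatorial definition of $C^{(r)}_{n,k}(z)$ in terms of the statistics $\fexc$ and $\fdes^*$.

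First I would unwind the definition. Recall that $C^{(r)}_{n,k}(z) = \sum z^{\lceil \fdes^*(\sigma,c)/r \rceil}$, summed over colored permutations with $\fexc(\sigma,c) = rn - k$. Packaging this with $y^k$ and recalling that $C^{(r)}_n(y,z) = \sum_k C^{(r)}_{n,k}(z) y^k$, the generating function becomes
\[
  C^{(r)}_n(y,z) = \sum_{(\sigma,c)\in\mathfrak{S}_n^{(r)}} y^{\,rn - \fexc(\sigma,c)}\, z^{\lceil \fdes^*(\sigma,c)/r\rceil}.
\]
The natural strategy is to pull out the factor $y^{rn}$ and rewrite the sum in terms of the joint distribution of $(\fexc, \fdes^*)$ on $\mathfrak{S}_n^{(r)}$. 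Because $\fexc$ and $\fdes^*$ are equidistributed with a pair built from ordinary descents (the paper stresses this equidistribution in Section~\ref{sec:chro}, where $\cdes$ is shown equidistributed with both flag-descent statistics via the bijections $\alpha,\alpha^*$), I expect the joint generating function for $(\fexc,\fdes^*)$ to collapse into something expressible through $A^{(r)}_n(y,z)$. The ceiling $\lceil \fdes^*/r\rceil$ must be handled carefully: writing $\fdes^* = r\cdot\des^* + c_1$ with $0\le c_1 < r$, the ceiling equals $\des^*$ when $c_1 = 0$ and $\des^* + 1$ otherwise, so the $z$-exponent is $\des^*$ plus an indicator. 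I would exploit this to separate the color contribution from the permutation contribution.

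The cleanest route is probably to verify the exponential-generating-function identity $C^{(r)}(x,y,z) = e^{(1-z)y^r x}A^{(r)}(x,y,z)$ directly at the level of the coefficients $[x^n/n!]$, i.e.\ to prove
\[
  C^{(r)}_n(y,z) = \sum_{j=0}^n \binom{n}{j}\bigl((1-z)y^r\bigr)^j A^{(r)}_{n-j}(y,z),
\]
which is the exact analogue of the relation already proved for $B^{(r)}$ in Theorem~\ref{relAB}. Since the right side is completely determined by the known formula~\eqref{formuleanr} for $A^{(r)}_n(y,z)$, this reduces the theorem to a single identity of rational functions in $y$ and $z$ after summing the $n$-th coefficient. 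I would substitute~\eqref{formuleanr} into the right side, resum the binomial convolution (it becomes a product of exponential generating functions, converting the $\binom{n}{j}$ sum into multiplication by $e^{(1-z)y^r x}$), and compare with an independently derived formula for $C^{(r)}(x,y,z)$.

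The main obstacle will be obtaining that independent closed form for $C^{(r)}(x,y,z)$, since unlike $A^{(r)}$ and $B^{(r)}$ it is defined through the more intricate pair $(\fexc,\fdes^*)$ rather than through descents of $\sigma$ and $\sigma^{-1}$. The crux is to show that the joint $(\fexc,\fdes^*)$ statistic, once weighted and resummed, reproduces exactly the $e^{(1-z)y^rx}$ prefactor relative to $A^{(r)}$. I expect this to require either a direct manipulation of the sum over $\mathfrak{S}_n^{(r)}$ using the decomposition $\fdes^* = r\des^* + c_1$ and a corresponding decomposition of $\fexc$, or an appeal to the equidistribution results of Foata and Han; the delicate bookkeeping of the ceiling function and of which coordinates carry color zero is where the real computation lives, and I would allot most of the effort there.
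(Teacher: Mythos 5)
Your reduction of the theorem to the coefficient-wise identity $C^{(r)}_n(y,z) = \sum_{j=0}^n \binom nj \bigl((1-z)y^r\bigr)^j A^{(r)}_{n-j}(y,z)$ is correct, and your rewriting of $C^{(r)}_n(y,z)$ as $\sum y^{rn-\fexc(\sigma,c)} z^{\lceil \fdes^*(\sigma,c)/r\rceil}$ is the right starting point; but the proposal stops precisely where the actual proof has to begin. You have no method for computing the joint generating function of the pair $(\fexc,\fdes^*)$, and the tools you point to cannot produce it. The bijections $\alpha$, $\alpha^*$ of Section~\ref{sec:chro} fix $\sigma$ and modify only the colors; since $\fexc$ depends on the colors (through the condition $c_i=0$ and the sum $\sum c_i$), these bijections do not preserve $\fexc$, so they yield equidistribution of the single statistics $\fdes$, $\fdes^*$, $\cdes$ (possibly jointly with $\des(\sigma^{-1})$), but say nothing about pairs involving $\fexc$. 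Likewise, Theorem~1.4 of \cite{foatahan} only asserts that $\fexc$ and $\fdes^*$ have the same marginal distribution, which does not determine their joint distribution on the same colored permutation. The paper itself warns how treacherous this is: $(\fexc,\fdes)$ and $(\fexc,\fdes^*)$ are \emph{not} equidistributed as pairs, even though $\fdes$ and $\fdes^*$ are equidistributed individually. Coupling an excedance-type statistic with a descent-type statistic is genuinely hard, and no separation of ``color contribution'' from ``permutation contribution'' will collapse it onto $A^{(r)}_n(y,z)$.

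What the paper actually does is import the full strength of the decrease value theorem of Foata and Han (Theorem~5.11 of \cite{foatahan}, specialized at $q=1$), which gives the closed formula \eqref{formulawn} for $\sum_n x^n \sum_{(\sigma,c)} y^{\fexc(\sigma,c)} z^{\fdes^*(\sigma,c)}$. The ceiling is then handled by the linear operator $\beta(z^k)=z^{\lceil k/r\rceil}$, which, applied to that formula, kills every term $F_k$ with $k\not\equiv 0 \bmod r$ and yields the ordinary generating function of the $C^{(r)}_n(y,z)$ (Theorem~\ref{ogfC}); a parallel closed form for $\sum_n A^{(r)}_n(y,z)x^n$ comes from \eqref{formuleanr} (Theorem~\ref{ogfA}); and since both of these are \emph{ordinary} generating functions while Theorem~\ref{relAC} concerns \emph{exponential} ones, the paper compares them via the Laplace transform, under which multiplication by $e^{(1-z)y^rx}$ becomes the shift $s\mapsto s-(1-z)y^r$. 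Your plan is compatible with this architecture, but without the decrease value theorem (or an equivalent result you would have to prove from scratch), the ``delicate bookkeeping'' you defer to is not bookkeeping at all --- it is the theorem.
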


Let us define 
\[
  W_n(y,z) := \sum_{(\sigma,c) \in \mathfrak{S}_n^{(r)}}  y^{ \fexc(\sigma,c)} z^{\fdes^*(\sigma,c)}.
\]
The particular case $q=1$ of \cite[Theorem 5.11]{foatahan}, after an easy simplification, gives the following formula:
\begin{equation} \label{formulawn}
  \sum_{n\geq 0  }  W_n(y,z) \frac{x^n}{(1-z^r)^n}  = (1-z)  \sum_{ k \geq 0 }  z^k F_k(x,y) , 
\end{equation}
where
\[
  F_k(x,y) = \frac{ (1-xy^r)^{\lfloor k/r\rfloor} } {(1-x)^{\lfloor k/r\rfloor+1}} (1-y^r)
 \times
\Bigl(  \frac{1-y^r}{1-y} - \sum_{i=1}^{r} y^i 
{  \frac{ (1-xy^r)^{\lfloor (k-i)/r\rfloor+1} } { (1-x)^{\lfloor (k-i)/r\rfloor+1}} } \Bigr)^{-1}. 
\]
Next, we define a linear operator $\beta$ on power series in $z$ by $\beta(z^k)= z^{\lceil k/r \rceil }$.
So:
\[
   \beta( z^k - z^{k+1} )  = \begin{cases}
                                0 & \text{ if } k \not\equiv 0 \mod r, \\
                                z^m-z^{m+1} & \text{ if } k=rm.
                             \end{cases}
\]
From Equation~\eqref{formulawn}, we get
\begin{equation} \label{formulabetawn}
  \sum_{n\geq 0  }  \beta( W_n(y,z) ) \frac{x^n}{(1-z)^n}  = \sum_{ m \geq 0 } ( z^m - z^{m+1} ) F_{rm}(x,y),
\end{equation}
and from the definition of $F_k$ we get
\begin{align*}
  F_{rm}(x,y) &= 
     \frac{(1-xy^r)^m}{ (1-x)^{m+1}} (1-y^r)
\Bigl(
 \frac{1-y^r}{1-y}
- \sum_{i=1}^{r}
y^i \frac {(1-xy^r)^{m} }
  { (1-x)^{m}} \Bigr)^{-1} 
\\
 & = \Bigl(  \frac{(1-x)^{m+1}}{(1-xy^r)^m(1-y)}  - \sum_{i=1}^r y^i \frac{1-x}{1-y^r}   \Bigl)^{-1}
\\
 & = \frac{1-y}{1-x}
 \Bigl(
 \bigl( \frac{1-x}{1-xy^r}\bigr)^m
-y\Bigr)^{-1}.
\end{align*}
From Equation~\eqref{formulabetawn} and the previous equation, and after the substitution 
$(x,y)  \leftarrow  (xy^r(1-z) , y^{-1} ) $, we reach:

\begin{theo} \label{ogfC}
\[
  \sum_{n\geq 0 } C_n^{(r)}(y,z) x^n = 
  \frac{(1-z)(1-y)}{1-xy^r(1-z)}
 \sum_{m\ge 0}z^m\,
 \Bigl(1-
 y\bigl( \frac {1-xy^r(1-z) }{ 1-x(1-z)} \bigr)^m
 \Bigr)^{-1}.
\]
\end{theo}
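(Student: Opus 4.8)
The plan is to start from Equation~\eqref{formulabetawn}, substitute the closed form for $F_{rm}(x,y)$ computed just above it, and then perform the stated change of variables. First I would record what needs to be proved: after the substitution $(x,y) \leftarrow (xy^r(1-z), y^{-1})$ in the left-hand side of~\eqref{formulabetawn}, the quantity $\beta(W_n(y,z))/(1-z)^n$ becomes (up to bookkeeping) $C_n^{(r)}(y,z)$, so that the left-hand side turns into $\sum_{n\geq 0} C_n^{(r)}(y,z)\, x^n$. The right-hand side is a manipulation of $\sum_{m\geq 0}(z^m - z^{m+1}) F_{rm}(x,y)$ under the same substitution. So the proof is essentially a careful substitution into an already-derived identity, and I would present it in that spirit.

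The first real step is to justify that the operator $\beta$ connects $W_n$ to $C_n^{(r)}$. By definition $W_n(y,z) = \sum_{(\sigma,c)} y^{\fexc(\sigma,c)} z^{\fdes^*(\sigma,c)}$ and $\beta(z^k) = z^{\lceil k/r\rceil}$, so $\beta(W_n(y,z)) = \sum_{(\sigma,c)} y^{\fexc(\sigma,c)} z^{\lceil \fdes^*(\sigma,c)/r\rceil}$. Matching the exponent of $y$ to the summation index $k$ via $\fexc(\sigma,c) = rn-k$ (as in Theorem~\ref{theobnr}) and recalling $C_n^{(r)}(y,z) = \sum_{k} C_{n,k}^{(r)}(z)\, y^k$ with $C_{n,k}^{(r)}(z) = \sum_{\fexc = rn-k} z^{\lceil \fdes^*/r\rceil}$, I would verify that $\beta(W_n(y,z)) = y^{rn} C_n^{(r)}(y^{-1}, z)$. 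This is the identification that forces the $y \mapsto y^{-1}$ part of the substitution, and the factor $y^{rn}$ will combine with the $x \mapsto xy^r(\cdots)$ part to clear all negative powers of $y$.

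The second step is the substitution itself. Plugging $(x,y) \leftarrow (xy^r(1-z), y^{-1})$ into the left side $\sum_{n\geq 0}\beta(W_n(y,z))\,x^n/(1-z)^n$ and using the identification above, each summand carries $x^n (1-z)^n y^{rn}/(1-z)^n = (xy^r)^n \cdot$ something that cancels against the $y^{rn}$ from $\beta(W_n)$; after simplification the series becomes $\sum_n C_n^{(r)}(y,z)\, x^n$, the desired left-hand side. On the right I would substitute into $\sum_{m}(z^m-z^{m+1})F_{rm}(x,y)$ using the derived form $F_{rm}(x,y) = \frac{1-y}{1-x}\big((\frac{1-x}{1-xy^r})^m - y\big)^{-1}$; under the substitution $\frac{1-y}{1-x}$ becomes $\frac{1-y^{-1}}{1-xy^r(1-z)}$ and the inner base $\frac{1-x}{1-xy^r}$ becomes $\frac{1-x(1-z)}{1-xy^r(1-z)}$, while the exponent $y^{-1}$ combines to invert the ratio and produce the stated $\big(1 - y(\frac{1-xy^r(1-z)}{1-x(1-z)})^m\big)^{-1}$; the prefactor $\frac{(1-z)(1-y)}{1-xy^r(1-z)}$ emerges from $(z^m - z^{m+1}) = z^m(1-z)$ together with the transformed $\frac{1-y}{1-x}$ factor.

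The main obstacle I anticipate is purely algebraic rather than conceptual: tracking the powers of $y$ through the inversion so that the substitution $y \mapsto y^{-1}$ turns $\big((\tfrac{1-x}{1-xy^r})^m - y\big)^{-1}$ into the form with $\big(1 - y(\cdots)^m\big)^{-1}$, and confirming that every spurious factor of $y$ (from $1-y^{-1}$, from $y^{rn}$, from the base of the $m$th power) cancels to leave a series with only nonnegative integer powers of all variables. I would handle this by factoring a suitable power of $y$ out of the inverted parenthesis and checking it against the $y^{rn}$ collected in the left-hand identification. Once the powers of $y$ are reconciled, the remaining steps are direct rewriting and the theorem follows.
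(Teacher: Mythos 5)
Your plan is the paper's own proof: Theorem~\ref{ogfC} is obtained there precisely by combining Equation~\eqref{formulabetawn} with the closed form of $F_{rm}(x,y)$ and performing the substitution $(x,y)\leftarrow(xy^r(1-z),y^{-1})$, and your identification $\beta(W_n(y,z))=y^{rn}C_n^{(r)}(y^{-1},z)$, which forces this substitution and collapses the left-hand side to $\sum_n C_n^{(r)}(y,z)x^n$, is exactly the bookkeeping the paper leaves implicit. One algebraic slip in your third paragraph needs repair, because as written it is internally inconsistent: since $1-x\mapsto 1-xy^r(1-z)$ (as you correctly use in the prefactor $\frac{1-y^{-1}}{1-xy^r(1-z)}$), the inner base must transform as
\[
  \frac{1-x}{1-xy^r}\;\longmapsto\;\frac{1-xy^r(1-z)}{1-x(1-z)},
\]
not into its reciprocal, which is what you wrote. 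With the correct base there is no ``inversion of the ratio'' to perform: the substituted factor is $\bigl(\bigl(\tfrac{1-xy^r(1-z)}{1-x(1-z)}\bigr)^m-y^{-1}\bigr)^{-1}$, and multiplying numerator and denominator by $-y$ turns the prefactor $1-y^{-1}$ into $1-y$ and the denominator into $1-y\bigl(\tfrac{1-xy^r(1-z)}{1-x(1-z)}\bigr)^m$, exactly the stated form. Had you followed your written intermediate literally, the same manipulation would yield $\bigl(1-y\bigl(\tfrac{1-x(1-z)}{1-xy^r(1-z)}\bigr)^m\bigr)^{-1}$, which is not the theorem, and no legitimate algebraic step ``inverts'' the $m$-th power afterwards; the discrepancy signals that the substitution, not the theorem, must be redone. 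With that correction your argument closes and coincides with the paper's proof.
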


Besides, from Equation~\eqref{formuleanr}, we have: 
\begin{align*}
    \sum_{n\geq0} x^n \frac{A_n^{(r)}(y,z)}{ (1-y^r)^n(1-y)(1-z)^{n+1}}
      &=\sum_{i,j \geq0 } \Bigl(\frac{1}{1-x}\Bigr)^{ij} y^i z^j
       =  \sum_{j\geq0} z^j \Bigl(1-y\bigl( \frac{1}{1-x}\bigr)^j   \Bigr)^{-1}.
\end{align*}
After the substition $x\leftarrow x(1-y^r)(1-z)$, we obtain:

\begin{theo} \label{ogfA}
\[
 \sum_{n\geq0} {A_n^{(r)}(y,z)} x^n =
 (1-y)(1-z)\sum_{m\geq 0} z^m \Bigl(1-y\bigl( \frac{1}{1-x(1-y^r)(1-z)}\bigr)^m   \Bigr)^{-1}.
\]
\end{theo}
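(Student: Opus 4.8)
The plan is to derive Theorem~\ref{ogfA} directly from Equation~\eqref{formuleanr} by two elementary manipulations: first summing a geometric-type series to pass from the binomial sum to a closed form, and then performing the stated substitution $x \leftarrow x(1-y^r)(1-z)$. First I would start from the identity
\[
  \frac{A^{(r)}_n(y,z)}{ (1-y^r)^n (1-y) (1-z)^{n+1} } = \sum_{i,j \geq 0} \binom{ij+n-1}{n} y^i z^j,
\]
multiply both sides by $x^n$, and sum over $n\geq 0$. The key observation is that the left-hand side then becomes exactly the ordinary generating function $\sum_{n\geq0} x^n A^{(r)}_n(y,z)/\big((1-y^r)^n(1-y)(1-z)^{n+1}\big)$ appearing in the display just before the theorem, so it suffices to evaluate the right-hand side $\sum_{n,i,j} \binom{ij+n-1}{n} x^n y^i z^j$ in closed form.

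The main computational step is to recognize the inner sum over $n$ as a binomial series. Since
\[
  \sum_{n\geq 0} \binom{N+n-1}{n} x^n = \frac{1}{(1-x)^{N}}
\]
for $N=ij$ (with the convention that the $n=0$ term is $1$), I would interchange the order of summation to get
\[
  \sum_{i,j\geq 0} y^i z^j \sum_{n\geq 0} \binom{ij+n-1}{n} x^n
  = \sum_{i,j\geq 0} y^i z^j \Big(\frac{1}{1-x}\Big)^{ij}.
\]
Next I would sum over $i$ for each fixed $j$, treating $\big(\frac{1}{1-x}\big)^{j}$ as the ratio of a geometric series in $y$:
\[
  \sum_{i\geq 0} y^i \Big(\frac{1}{1-x}\Big)^{ij}
  = \Big(1 - y\big(\tfrac{1}{1-x}\big)^{j}\Big)^{-1},
\]
which yields the intermediate identity
\[
  \sum_{n\geq0} x^n \frac{A_n^{(r)}(y,z)}{ (1-y^r)^n(1-y)(1-z)^{n+1}}
  = \sum_{j\geq 0} z^j \Big(1-y\big(\tfrac{1}{1-x}\big)^{j}\Big)^{-1},
\]
exactly the displayed equation preceding the theorem.

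The final step is purely algebraic: substitute $x \leftarrow x(1-y^r)(1-z)$ throughout. On the left-hand side this replaces each factor $x^n(1-y^r)^{-n}(1-z)^{-n}$ by $x^n(1-z)^{-n}\cdot(1-z)^{n} = x^n$ up to the surviving overall factor, so after clearing the remaining $(1-y)^{-1}(1-z)^{-1}$ one obtains $\frac{1}{(1-y)(1-z)}\sum_{n\geq0} A_n^{(r)}(y,z)\,x^n$; moving that prefactor to the other side produces the $(1-y)(1-z)$ displayed in Theorem~\ref{ogfA}. On the right-hand side the substitution turns $\frac{1}{1-x}$ into $\frac{1}{1-x(1-y^r)(1-z)}$ and renames the summation index $j$ to $m$, giving precisely the claimed formula. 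I do not expect a serious obstacle here; the only point requiring care is bookkeeping of the powers of $(1-y^r)$ and $(1-z)$ under the substitution so that they cancel correctly against the prefactor on the left, and confirming that the interchange of summation and the geometric summations are justified as formal power series identities.
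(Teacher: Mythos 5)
Your proof is correct and follows essentially the same route as the paper: both start from Equation~\eqref{formuleanr}, sum the binomial series over $n$ to obtain $\sum_{i,j\geq 0} y^i z^j \bigl(\frac{1}{1-x}\bigr)^{ij}$, collapse the geometric sum over $i$, and then perform the substitution $x \leftarrow x(1-y^r)(1-z)$. The paper merely compresses these steps into a single displayed computation immediately preceding the statement of the theorem.
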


\begin{proof}[Proof of Theorem \ref{relAC}]
Since Theorem~\ref{relAC} is a relation on exponential generating functions, it is convenient to use
the Laplace transform. It sends a function $f(x)$ to
\[
  \mathcal{L}(f(x),x,s) = \int_0^\infty f(x) e^{-xs} {\rm d}x,
\]
in particular,
\[
  \mathcal{L} \Big( \frac{x^k}{k!} , x , s \Big) = \frac{1}{s^{k+1}}.
\]
We have:
\begin{align*}
  \mathcal{L} \big( e^{(1-z)y^rx} A^{(r)}(x,y,z) , x, s \big)  & = \int_0^{\infty} e^{(1-z)y^rx} A^{(r)}(x,y,z) e^{-xs} {\rm d}x \\
                                                               & = \int_0^{\infty}  A^{(r)}(x,y,z) e^{(1-z)y^rx-xs} {\rm d}x \\
                                                               & = \mathcal{L}\big( A^{(r)}(x,y,z) , x , s-(1-z)y^r \big).
\end{align*}
By Theorem~\ref{ogfA}, with $s'=s-(1-z)y^r$, the latter expression is equal to
\[
  \frac{(1-y)(1-z)}{s'}
  \sum_{m\geq0} z^m \Bigl(1-y\bigl(\frac{1}{1-\frac{1}{s'}(1-y^r)(1-z)}\bigr)^m   \Bigr)^{-1}.
\]
Since 
\[
\frac{1}{1-\frac{1}{s'}(1-y^r)(1-z)}
=
\frac{1}{1-\frac{1}{s-(1-z)y^r}(1-y^r)(1-z)}
=
\frac{s-(1-z)y^r}{s-(1-z)},
\]
we get:
\[
  \mathcal{L} \big( e^{(1-z)y^rx} A^{(r)}(x,y,z) , x, s \big) = 
  \frac{(1-y)(1-z)}{s-(1-z)y^r}\sum_{m\geq 0} z^m \Bigl(1-y\bigl(\frac{s-(1-z)y^r}{s-(1-z)}\bigr)^m   \Bigr)^{-1}.
\]
Besides, from Theorem~\ref{ogfC}, we also get:
\[
  \mathcal{L} \big( C^{(r)}(x,y,z) , x, s \big) = 
  \frac{(1-y)(1-z)}{s-(1-z)y^r}\sum_{m\geq 0} z^m \Bigl(1-y\bigl(\frac{s-(1-z)y^r}{s-(1-z)}\bigr)^m   \Bigr)^{-1}  .
\]
So we have proved 
\[
  \mathcal{L} \big( C^{(r)}(x,y,z) , x, s \big) = \mathcal{L}\big( e^{(1-z)y^rx} A^{(r)}(x,y,z) , x , s \big),
\]
which completes the proof of Theorem~\ref{relAC}.
\end{proof}

\section{Another combinatorial model}

\label{sec:bij}

We give in this section a bijective proof of
\[
    \sum_{ (\sigma,c) \in \mathfrak{S}_n^{(r)} } y^{ {\fexc}(\sigma,c)  } z^{ \lceil {\fdes}(\sigma,c) /r \rceil }
  = \sum_{ (\sigma,c) \in \mathfrak{S}_n^{(r)} } y^{ {\fexc}(\sigma,c)  } z^{ \lceil {\fdes^*}(\sigma,c) /r \rceil },
\]
by defining an involution $I$ on colored permutations such that
\[
  y^{ {\fexc}(\sigma,c)  } z^{ \lceil {\fdes}(\sigma,c) /r \rceil } = y^{ {\fexc}(I(\sigma,c))  } z^{ \lceil {\fdes^*}(I(\sigma,c)) /r \rceil }.  
\]
Let $(\sigma,c) \in \mathfrak{S}^{(r)}_n$. We consider $(\sigma,c)$ as a word whose successive
letters are $(\sigma_1,c_1)$, $(\sigma_2,c_2),\dots$, $(\sigma_n,c_n)$. 
Note that the pair $(\sigma_i,c_i)$ is considered as a letter with color $c_i$.
Then, we consider the unique factorization 
\[
  (\sigma_1,c_1) \dots (\sigma_n,c_n) =  B_1 \dots B_m
\]
where each block $B_i$ contains letters of the same color, and $m$ is minimal.
The involution $I$ is defined by permuting the blocks, following these two conditions:
\begin{itemize}
 \item each zero colored block stays at the same location,
 \item each maximal sequence of nonzero colored blocks $B_j \dots B_k$ is replaced with $B_k \dots B_j$
       (maximal means that $B_{j-1}$ is zero colored or $j=1$, and $B_{k+1}$ is zero colored or $k=n$).
\end{itemize}
For example, with $n=8$ and $r=3$:
\[
  (8,1) (2,0) (7,2) (1,2) (4,1) (3,0) (5,1) (6,1)
\]
is sent to 
\[
  (8,1) (2,0) (4,1) (7,2) (1,2) (3,0) (5,1) (6,1).
\]

\begin{lem}
 $\fexc(\sigma,c) = \fexc(I(\sigma,c))$.
\end{lem}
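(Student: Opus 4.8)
The plan is to split $\fexc$ into its two natural summands and verify that $I$ preserves each one independently. Writing
\[
\fexc(\sigma,c) = r\cdot\#\{ i : \sigma_i > i \text{ and } c_i = 0 \} + \sum_{i=1}^n c_i,
\]
I would first observe that $I$ only rearranges the letters $(\sigma_i,c_i)$ of the word, so the multiset of colors is unchanged and the term $\sum_i c_i$ is trivially invariant. Everything then reduces to showing that the number of \emph{zero-colored excedences}, namely $\#\{ i : \sigma_i > i \text{ and } c_i = 0 \}$, is the same for $(\sigma,c)$ and $I(\sigma,c)$.

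The crux is the following claim: every zero-colored letter keeps both its position and its value $\sigma_i$ under $I$. To see this, note that the zero-colored blocks cut the word into maximal segments, each of which is either a zero-colored block (left untouched by $I$) or a maximal run of nonzero-colored blocks. The involution reverses the block order inside each nonzero run but does not alter any zero-colored block. Since reversing a run of blocks does not change the total number of letters it contains, the length of every segment is preserved; hence the indices occupied by the zero-colored blocks do not move, and neither do the letters inside them.

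Granting this claim, the set $\{ i : c_i = 0 \}$ is fixed pointwise by $I$, and for each such $i$ the value $\sigma_i$ is unchanged, so the inequality $\sigma_i > i$ holds for exactly the same zero-colored positions before and after. Thus $\#\{ i : \sigma_i > i \text{ and } c_i = 0 \}$ is invariant, and combining with the invariance of $\sum_i c_i$ yields $\fexc(\sigma,c)=\fexc(I(\sigma,c))$.

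The only real work is in the positional claim of the second paragraph: one must check carefully that the reversal acts length-preservingly on each nonzero run so that no zero-colored letter is displaced. The nonzero-colored letters are of course permuted by $I$, but since the excedence term is restricted to color $0$, their movement has no bearing on the count, which is exactly why this argument goes through so cleanly.
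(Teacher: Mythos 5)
Your proof is correct and is essentially the paper's own argument, just spelled out in full: the paper's one-line proof ("the letters with color $0$ are unchanged by $I$, and the sum of the colors is preserved") is exactly your two invariances, with your positional claim about length-preserving reversal of nonzero runs being the justification the paper leaves implicit.
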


\begin{proof}
This is immediate, since the letters with color $0$ are unchanged by $I$
(and the sum of the colors is also preserved).
\end{proof}

\begin{lem}
 $\lceil \fdes(\sigma,c) /r \rceil = \lceil \fdes^*(I(\sigma,c)) /r \rceil$.
\end{lem}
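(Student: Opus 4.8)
The plan is to reduce the statement to a comparison of ordinary colored descent numbers. Since $\fdes(\sigma,c) = r\cdot\des(\sigma,c) + c_n$ with $0 \le c_n < r$, one has $\lceil \fdes(\sigma,c)/r\rceil = \des(\sigma,c) + \varepsilon(c_n)$, where $\varepsilon(c)=0$ if $c=0$ and $\varepsilon(c)=1$ otherwise; symmetrically $\lceil \fdes^*(\sigma,c)/r\rceil = \des^*(\sigma,c) + \varepsilon(c_1)$. Writing $I(\sigma,c)=(\sigma',c')$ (note that both the permutation and the colors may change), it therefore suffices to prove
\[
  \des(\sigma,c) + \varepsilon(c_n) = \des^*(\sigma',c') + \varepsilon(c'_1).
\]

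First I would split each descent count according to the block factorization $B_1\cdots B_m$ into maximal monochromatic blocks. At a position interior to a block the color is constant, so a $\des$-descent and a $\des^*$-descent both reduce to the condition $\sigma_j>\sigma_{j+1}$. The involution $I$ only permutes entire blocks and leaves the internal content of each block untouched; moreover reversing the order of the blocks inside a nonzero run cannot merge blocks, since consecutive blocks keep distinct colors, so the blocks $B_1,\dots,B_m$ reordered are again the minimal block factorization of $I(\sigma,c)$. Consequently the total intra-block contribution to $\des(\sigma,c)$ equals the total intra-block contribution to $\des^*(\sigma',c')$. This reduces the claim to the inter-block boundaries: let $D$ be the number of block boundaries of $(\sigma,c)$ where the color strictly decreases, and $D^*$ the number of block boundaries of $I(\sigma,c)$ where the color strictly increases.

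Next I would write the block sequence as $R_0\,Z_1\,R_1\,\cdots\,Z_p\,R_p$, where $Z_1,\dots,Z_p$ are the zero-colored blocks (no two adjacent, as they would otherwise merge, whence $R_1,\dots,R_{p-1}$ are nonempty) and each $R_i$ is the maximal nonzero run between them, with $R_0$ and $R_p$ possibly empty. Then $I(\sigma,c)$ has block sequence $\tilde R_0\,Z_1\,\tilde R_1\,\cdots\,Z_p\,\tilde R_p$, where $\tilde R_i$ is $R_i$ with its blocks in reverse order. Inside a run, reversal carries color-descents bijectively to color-ascents, so the interiors of the runs contribute the same amount $\sum_i d(R_i)$ to both $D$ and $D^*$, where $d(R_i)$ is the number of color-descents of $R_i$. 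A color-descent adjacent to a zero block occurs in $(\sigma,c)$ exactly at each nonzero-then-zero boundary, giving $D = \sum_i d(R_i) + p - [\,R_0=\varnothing\,]$; a color-ascent adjacent to a zero block occurs in $I(\sigma,c)$ exactly at each zero-then-nonzero boundary, giving $D^* = \sum_i d(R_i) + p - [\,R_p=\varnothing\,]$, where $[\,\cdot\,]$ is the Iverson bracket. Hence $D - D^* = [\,R_p=\varnothing\,] - [\,R_0=\varnothing\,]$.

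Finally I would match the endpoint corrections: $c_n$ is the color of the last block, which is nonzero precisely when $R_p\neq\varnothing$, so $\varepsilon(c_n)=1-[\,R_p=\varnothing\,]$; likewise the first block of $I(\sigma,c)$ is nonzero precisely when $R_0\neq\varnothing$, so $\varepsilon(c'_1)=1-[\,R_0=\varnothing\,]$. Combining, $\big(D+\varepsilon(c_n)\big)-\big(D^*+\varepsilon(c'_1)\big)=(D-D^*)-[\,R_p=\varnothing\,]+[\,R_0=\varnothing\,]=0$, which together with the intra-block equality yields $\des(\sigma,c)+\varepsilon(c_n)=\des^*(\sigma',c')+\varepsilon(c'_1)$ and hence the lemma. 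I expect the main obstacle to be the bookkeeping of the boundary terms around the zero blocks: the asymmetry between the head and the tail of the word is exactly what produces the terms $[\,R_0=\varnothing\,]$ and $[\,R_p=\varnothing\,]$, and it is their cancellation against the endpoint colors that makes the identity hold.
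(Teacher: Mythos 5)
Your proof is correct and follows essentially the same route as the paper's: both split the descent counts into intra-block contributions (which are trivially preserved), color descents inside the nonzero runs (which reversal turns bijectively into color ascents), and the boundary terms at the zero-colored blocks, where the endpoint colors $c_n$ and $c'_1$ cancel the head/tail asymmetry. Your Iverson-bracket bookkeeping is just a more systematic packaging of the paper's segment-by-segment matching of prefix, suffix, and zero-to-zero factors.
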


\begin{proof}
We compute $\lceil \fdes(\sigma,c) /r \rceil$ on one side and $\lceil \fdes^*(I(\sigma,c)) /r \rceil$ and the other side,
by examining the different contributions to each quantity.

First, each pair of letters $(\sigma_i,c_i) (\sigma_{i+1},c_{i+1})$ where $\sigma_i>\sigma_{i+1}$ inside a given block $B_j$ 
contribute by 1 to each side (since $c_i=c_{i+1}$ by definition of the blocks). It remains to consider the term
$r\times\#\{ i \, :\, c_i > c_{i+1}  \} + c_n$ in the definition of $\fdes$, and the term
$r\times\#\{ i \, :\, c_i < c_{i+1}  \} + c_1$ in the definition of $\fdes^*$.

Let us write $B_i>B_{i+1}$ or $B_i<B_{i+1}$ to mean that the color of the block $B_i$ is greater or smaller than that of 
$B_{i+1}$ (by definition they cannot be equal).
Let $j<k$ be such that $B_j$ and $B_k$ are zero colored blocks, but $B_{j+1},\dots,B_{k-1}$ are not.
In the factor $B_j\dots B_k$ of $(\sigma,c)$, there is a contribution 
\[
  \#\big\{ i \,:\,  j\leq i <k \text{ and } B_i > B_{i+1}  \big\}
\]
to $\lceil \fdes(\sigma,c) /r \rceil$. But in the factor $B_j B_{k-1}\dots B_{j+1} B_k$ of $I(\sigma,c)$,
there is the same contribution to $\lceil \fdes^*(I(\sigma,c)) /r \rceil$.

Now, let $B_j$ be the first zero colored block of $(\sigma,c)$. If $j>1$, the prefix $B_1\dots B_j$ of $(\sigma,c)$ 
contributes by 
\[
  \#\big\{ i \,:\,  1\leq i <j \text{ and } B_i > B_{i+1}  \big\}
\]
to $\lceil \fdes(\sigma,c) /r \rceil$, and the prefix $B_{j-1}\dots B_1 B_j$ of $I(\sigma,c)$ contributes by
\[
  1 + \#\big\{ i \,:\,  j-1 > i \geq 1 \text{ and } B_{i+1} < B_i  \big\}
\]
to $\lceil \fdes^*(I(\sigma,c)) /r \rceil$ (the 1 come from the term $c_1$ in the definition of $\fdes^*$
since $B_{j-1}$ is a nonzero colored block). The two numbers are easily seen to be equal.

Similarly, let $B_k$ be the last zero colored block of $(\sigma,c)$. If $k<m$, the suffix $B_k\dots B_m$ of $(\sigma,c)$
contributes by
\[
  1+ \#\big\{ i \,:\,  k \leq i <m \text{ and } B_i > B_{i+1}  \big\}
\]
to $\lceil \fdes(\sigma,c) /r \rceil$ (the 1 come from the term $c_n$ in the definition of $\fdes$
since $B_m$ is a nonzero colored block), and the suffix $B_k B_ m \dots B_{k+1}$ of $I(\sigma,c)$ contributes by
\[
  1 + \#\big\{ i \,:\,  m > i \geq k+1 \text{ and } B_{i+1} < B_i  \big\}
\]
to $\lceil \fdes^*(I(\sigma,c)) /r \rceil$ (the 1 come from the fact that $B_k<B_m$). 
The two numbers are easily seen to be equal.

Checking the respective definitions of $\fdes$ and $\fdes^*$, we can see that what we have counted proves the proposition.
\end{proof}

\section{Formulas for the Ehrhart polynomials}

\label{sec:ehrform}

\begin{theo}
The Ehrhart polynomial of $\mathcal{A}^{(r)}_{n,k}$ is:
\begin{align*}
   \sum_{j=0}^{\lfloor (k-1)/r \rfloor} (-1)^{j+1}  \tbinom nj \tbinom{n-rtj+kt-t-1}{n} -  
                      \sum_{j=0}^{\lfloor k/r \rfloor  } (-1)^{j+1}  \tbinom nj \tbinom{n-rtj+kt-1}{n}.
\end{align*}
\end{theo}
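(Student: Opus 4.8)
The plan is to compute the Ehrhart polynomial directly as a lattice-point count and then extract the closed form by a single inclusion–exclusion over the upper bounds of the coordinates. First I would unwind the definitions: scaling by $t$, an integer point of $t\mathcal{A}^{(r)}_{n,k}$ is a vector $w=(w_1,\dots,w_n)\in\mathbb{Z}^n$ with $0\le w_i\le rt-1$ for each $i$ and $(k-1)t\le \sum_i w_i\le kt-1$, the strict upper inequalities $v_i<r$ and $\sum v_i<k$ becoming $w_i\le rt-1$ and $\sum_i w_i\le kt-1$ once we restrict to integers. Hence
\[
  E(\mathcal{A}^{(r)}_{n,k},t)=\#\bigl\{w\in\mathbb{Z}^n : 0\le w_i\le rt-1,\ \textstyle\sum_i w_i\in[(k-1)t,\,kt-1]\bigr\}.
\]

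Next I would write this as a difference $g(kt-1)-g\bigl((k-1)t-1\bigr)$, where $g(S)$ counts the $w$ with $0\le w_i\le rt-1$ and $\sum_i w_i\le S$. The function $g(S)$ is exactly a count of bounded compositions, handled by inclusion–exclusion on the overflow events $w_i\ge rt$: the number of nonnegative integer vectors with $\sum_i w_i\le S$ and no upper bound is $\binom{S+n}{n}$, and forcing $j$ prescribed coordinates to exceed $rt-1$ shifts $S$ to $S-jrt$. This gives
\[
  g(S)=\sum_{j\ge 0}(-1)^j\binom nj\binom{S-jrt+n}{n},
\]
with the convention that a binomial coefficient whose top lies strictly between $0$ and $n$ vanishes. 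Substituting $S=kt-1$ and $S=(k-1)t-1$ produces precisely the two families of tops $n-rtj+kt-1$ and $n-rtj+kt-t-1$ appearing in the statement.

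The one delicate point, which I expect to be the main obstacle, is pinning down the exact ranges of $j$. For $S=kt-1$ the combinatorial sum runs over $j\le\lfloor(kt-1)/(rt)\rfloor$; a short estimate shows this floor equals $\lfloor k/r\rfloor$ when $r\nmid k$, while if $r\mid k$ it equals $\lfloor k/r\rfloor-1$ but the omitted term $j=k/r$ has top $n-1$ and so is $\binom{n-1}{n}=0$. Thus one may uniformly write the range as $0\le j\le\lfloor k/r\rfloor$. The identical analysis for $S=(k-1)t-1$ truncates at $\lfloor(k-1)/r\rfloor$, the boundary term vanishing for the same reason when $r\mid(k-1)$. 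I would also record that throughout these ranges the binomial tops are $\ge n-1\ge 0$, so the combinatorial and polynomial interpretations of the binomials agree; consequently the expression is a genuine polynomial in $t$ that matches the point count for every $t\ge 1$, which certifies it as the Ehrhart polynomial.

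Finally I would assemble the two pieces, $E(\mathcal{A}^{(r)}_{n,k},t)=g(kt-1)-g\bigl((k-1)t-1\bigr)$, and use $(-1)^{j+1}=-(-1)^j$ to rewrite the signs, at which point the two sums line up exactly with the displayed formula. The substantive work is the bookkeeping of the summation bounds and the verification that the boundary terms collapse to $\binom{n-1}{n}=0$; the remainder is the routine inclusion–exclusion count of bounded compositions.
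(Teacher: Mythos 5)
Your proposal is correct, and at its core it performs the same computation as the paper: an inclusion--exclusion over which coordinates overflow the upper bound $rt$, applied to the slab of lattice points whose coordinate sum lies in $[(k-1)t,\,kt-1]$. The packaging differs, though. The paper encodes this inclusion--exclusion in constant-term calculus, writing the count as $\mathrm{CT}_q\bigl([rt]_q^n([kt]_{q^{-1}}-[kt-t]_{q^{-1}})\bigr)$ and expanding $(1-q^{rt})^n$ binomially, which yields a sum over all $j=0,\dots,n$ under the convention that binomials with negative top vanish; it then passes to the announced truncated ranges by observing that the two expressions are polynomials in $t$ agreeing for large $t$, hence identical. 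You instead count directly (slack variable plus hockey stick gives $g(S)=\sum_j(-1)^j\binom nj\binom{S-jrt+n}{n}$) and, more substantively, you handle the truncation exactly: you compute $\lfloor(kt-1)/(rt)\rfloor=\lfloor k/r\rfloor$ unless $r\mid k$, in which case the single boundary term has top $n-1$ and vanishes, and likewise for $S=(k-1)t-1$. This buys an identity valid for every $t\geq 1$ without any appeal to polynomiality or a large-$t$ limit, and it keeps all retained binomial tops $\geq n-1$, where the combinatorial and polynomial interpretations agree --- arguably a cleaner finish than the paper's. One small slip in wording: the convention you state for $g(S)$ (``top strictly between $0$ and $n$ vanishes'') is automatic for polynomial binomials but does not cover \emph{negative} tops, which is the case actually requiring a convention (e.g.\ $\binom{-1}{n}=(-1)^n\neq 0$ as a polynomial); what you need is that terms with $S-jrt<0$ are zero, as the inclusion--exclusion derivation itself dictates. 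This is harmless here, since your final ranges exclude all such terms, but the convention should be stated as ``top less than $n$'' (equivalently, empty count) to make the intermediate formula literally correct.
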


\begin{proof}
Let ${\rm CT}_q$ denote the operator that gives the constant term of a Laurent series in $q$. We have:
\[
   \# \big( \mathbb{Z}^n \cap t\mathcal{A}^{(r)}_{n,k} \big)  
                   = \# \big\{ v \in \{0,1,\dots,rt-1\}^n  \, : \,  kt-t \leq \sum v_i < kt \big\}
\]

\[
   =  {\rm CT}_q \bigg(  [rt]_q^n  ( [kt]_{q^{-1}} - [kt-t]_{q^{-1}} )  \bigg)
\]

\[
   = {\rm CT}_q \bigg(  \frac{ (1-q^{rt})^n  ( q^{-kt+t-1} - q^{-kt-1 } )   }{ (1-q)^n (1-q^{-1}) }   \bigg)
\]

\[
   = {\rm CT}_q \bigg(  \sum_{j=0}^n \binom nj (-1)^{j+1}  q^{rtj+j}  \frac {  ( q^{-kt+t} - q^{-kt } )   }{ (1-q)^{n+1} }   \bigg)
\]

\[
   = {\rm CT}_q \bigg(  \sum_{j=0}^n \sum_{i\geq0}  \binom nj \binom{n+i}{n}  (-1)^{j+1}  q^{rtj+j+i}   ( q^{-kt+t} - q^{-kt } )  \bigg)
\]

\[
   = \sum_{j=0}^n  \binom nj   (-1)^{j+1}  \left( \binom{n+kt+t-(rt+1)j}{n} - \binom{n+kt-(rt+1)j}{n}  \right)
\]
with the (unusual) convention that $\binom nk=0$ when $n<0$.
With this convention, it is not clear that we have a polynomial in $t$.
But we can improve the formula by keeping only some of the indices $j$, those appearing
in the announced formula.
Indeed both formulas are equal for large $t$, hence for every $t$ since these are polynomials.
\end{proof}

\begin{theo}
The Ehrhart polynomial of $\mathcal{B}^{(r)}_{n,k}$ is:
\begin{align*}
   \sum_{j=0}^{\lfloor (k-1)/r \rfloor} (-1)^{j+1}  \tbinom nj \tbinom{n-rtj-j+kt-t-1}{n} - 
                      \sum_{j=0}^{\lfloor k/r \rfloor  } (-1)^{j+1}  \tbinom nj \tbinom{n-rtj-j+kt-1}{n}.
\end{align*}
\end{theo}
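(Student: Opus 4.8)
The plan is to repeat, almost verbatim, the constant-term computation used to establish the Ehrhart polynomial of $\mathcal{A}^{(r)}_{n,k}$ in the previous theorem; the only structural change is that the \emph{closed} hypercube $[0,r]^n$ replaces the half-open one $[0,r)^n$, so that each coordinate of a dilate is allowed to equal its maximum. Concretely, for $1\le k<rn$ the dilate $t\mathcal{B}^{(r)}_{n,k}$ consists of the lattice points
\[
  \big\{ v \in \{0,1,\dots,rt\}^n \, : \, kt-t \leq \sum v_i < kt \big\},
\]
which differs from the $\mathcal{A}$-case only in that each $v_i$ now ranges over $\{0,\dots,rt\}$ instead of $\{0,\dots,rt-1\}$. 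I would then write this count as a constant-term extraction exactly as before, but with the per-coordinate $q$-integer $[rt]_q$ replaced by $[rt+1]_q=\tfrac{1-q^{rt+1}}{1-q}$:
\[
  E(\mathcal{B}^{(r)}_{n,k},t)={\rm CT}_q\Bigl( [rt+1]_q^{\,n}\,\bigl([kt]_{q^{-1}}-[kt-t]_{q^{-1}}\bigr)\Bigr)
  ={\rm CT}_q\Bigl( \frac{(1-q^{rt+1})^n\,(q^{-kt+1}-q^{-kt+t+1})}{(1-q)^{n+1}}\Bigr).
\]

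The next step is to expand $(1-q^{rt+1})^n=\sum_{j=0}^n\binom nj(-1)^j q^{(rt+1)j}$ together with $\tfrac{1}{(1-q)^{n+1}}=\sum_{i\ge0}\binom{n+i}{n}q^i$, and read off the constant term. The crucial observation is that the exponent $q^{(rt+1)j}=q^{rtj+j}$ carries an extra factor $q^{j}$ compared with the $q^{rtj}$ appearing in the $\mathcal{A}$-computation; this is precisely the origin of the extra $-j$ in each top binomial argument of the target formula. Matching the constant term forces $i=kt-rtj-j-1$ or $i=kt-t-rtj-j-1$, yielding (with the same convention $\binom{M}{n}=0$ for $M<0$)
\[
  \sum_{j=0}^{n}(-1)^{j+1}\binom nj\Bigl(\tbinom{n-rtj-j+kt-t-1}{n}-\tbinom{n-rtj-j+kt-1}{n}\Bigr),
\]
which is the announced expression before truncation.

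Finally I would cut the summation ranges down exactly as in the previous proof. Writing the first top argument as $n-rtj-j+kt-t-1=(n-j-1)+t(k-1-rj)$, the binomial vanishes for all large $t$ once $k-1-rj<0$, i.e. $j>\lfloor(k-1)/r\rfloor$; and in the borderline case $k-1-rj=0$ the argument reduces to $n-j-1<n$, so the binomial is again $0$. Hence only $0\le j\le\lfloor(k-1)/r\rfloor$ survive in the first sum, and symmetrically $0\le j\le\lfloor k/r\rfloor$ in the second. Since the truncated and untruncated expressions agree for all large $t$ and both are polynomials in $t$, they agree identically, which gives the claimed formula.

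I expect the main (indeed the only genuinely non-routine) obstacle to be the degenerate boundary $k=rn$, where the definition uses a non-strict upper inequality. There the apex $(rt,\dots,rt)$ is a lattice point with $\sum v_i=rnt$ that the strict-inequality count above omits, so one must verify whether it is already accounted for or whether a separate $+1$ correction is needed; a small check (for instance $r=n=k=1$, where the polytope is $[0,1]$ with Ehrhart polynomial $t+1$) suggests the formula as written reproduces the cases $1\le k<rn$ precisely, and that the $k=rn$ corner requires this extra apex bookkeeping, which I would treat by a separate elementary argument.
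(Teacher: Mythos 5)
Your proposal is correct and is essentially the paper's own proof: the same constant-term extraction with $[rt+1]_q$ replacing $[rt]_q$ (the extra $q^{j}$ in $q^{(rt+1)j}$ producing the $-j$ shifts), the same expansion, and the same truncate-and-compare-polynomials conclusion.

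One clarification on the loose end you flag at $k=rn$, which is a genuine subtlety that the paper's proof silently passes over: no separate apex correction is needed, because the truncated formula, read with the usual polynomial binomial coefficients, already contains it. For $k=rn$ the term $j=\lfloor k/r\rfloor=n$ of the second sum has top argument $n-rtn-n+rnt-1=-1$, so as a polynomial it equals $(-1)^{n+1}\binom{n}{n}\binom{-1}{n}=(-1)^{n+1}(-1)^n=-1$, and since the formula is the first sum \emph{minus} the second sum, this term contributes exactly $+1$. All the other retained terms agree for large $t$ with their convention values, so the truncated formula equals (strict count)$\,+1$ for large $t$; and since the unique point of $\{0,\dots,rt\}^n$ with coordinate sum $rnt$ is the apex $(rt,\dots,rt)$, this is precisely $E(\mathcal{B}^{(r)}_{n,rn},t)$ for large $t$, hence identically. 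So your ``separate elementary argument'' reduces to this one-term evaluation; your small check $r=n=k=1$ confirms it, since the formula as stated already returns $t+1$ there. (Note that this anomaly cannot occur in the $\mathcal{A}^{(r)}_{n,k}$ theorem: without the $-j$ shift, the corresponding top argument at $j=n$, $k=rn$ is $n-1\geq 0$, and $\binom{n-1}{n}=0$ both ways.)
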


\begin{proof}
This is similar to the previous proposition:
\[
   \# \big( \mathbb{Z}^n \cap t\mathcal{B}^{(r)}_{n,k} \big)  
                   = \# \big\{ v \in \{0,1,\dots,rt\}^n  \, : \,  kt-t \leq \sum v_i < kt \big\}
\]

\[
   =  {\rm CT}_q \bigg(  [rt+1]_q^n  ( [kt]_{q^{-1}} - [kt-t]_{q^{-1}} )  \bigg)
\]

\[
   = {\rm CT}_q \bigg(  \frac{ (1-q^{rt+1})^n  ( q^{-kt+t} - q^{-kt } )   }{ (1-q)^n (1-q^{-1}) }   \bigg)
\]

\[
   = {\rm CT}_q \bigg(  \sum_{j=0}^n \binom nj (-1)^{j+1}  q^{rtj+j}  \frac {  ( q^{-kt+t+1} - q^{-kt+1} )   }{ (1-q)^{n+1} }   \bigg)
\]

\[
   = {\rm CT}_q \bigg(  \sum_{j=0}^n \sum_{i\geq0}  \binom nj \binom{n+i}{n}  (-1)^{j+1}  q^{rtj+j+i}   ( q^{-kt+t+1} - q^{-kt+1 } )  \bigg)
\]

\[
   = \sum_{j=0}^n  \binom nj   (-1)^{j+1}  \left( \binom{n-rtj-j+kt-t-1}{n} - \binom{n-rtj-j+kt-1}{n}  \right).
\]
As in the previous case, the formula is obtained with the convention that $\binom nk=0$ when $n<0$,
but is true in general.
\end{proof}

We also obtain a formula for the flag Eulerian numbers.

\begin{theo}
The flag Eulerian number is: 
\[
   A^{(r)}_{n,k} = 
        \sum_{j=0}^{ \lfloor (k-1)/r \rfloor } \binom nj  (-1)^{j+1}  (k-rj-1)^n  
      - \sum_{j=0}^{ \lfloor k/r \rfloor } \binom nj      (-1)^{j+1}  (k-rj)^n.
\] 
\end{theo}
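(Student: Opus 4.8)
The plan is to read off the flag Eulerian numbers as the coefficients of a single-variable polynomial and then to extract those coefficients from the product formula~\eqref{flagpol}. First I would observe that the flag Eulerian number is recovered by specializing the Ehrhart series at $z=1$: by Proposition~\ref{ehrsA},
\[
  A^{(r)}_{n,k}(1) = E^*(\mathcal{A}^{(r)}_{n,k},1) = \#\{(\sigma,c)\in\mathfrak{S}^{(r)}_n : \fdes(\sigma,c)=k-1\} = A^{(r)}_{n,k},
\]
so that the ordinary generating polynomial of the flag Eulerian numbers in the index $k$ is exactly $A^{(r)}_n(y,1)=\sum_{k=0}^{rn} A^{(r)}_{n,k}\,y^k$.

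Next I would invoke the closed form~\eqref{flagpol}, which reads
\[
  \sum_{k=0}^{rn} A^{(r)}_{n,k}\,y^k = A^{(r)}_n(y,1) = (1-y^r)^n (1-y) \sum_{i\geq 1} i^n y^i.
\]
Expanding the prefactor by the binomial theorem gives
\[
  (1-y^r)^n(1-y) = \sum_{j=0}^n \binom nj (-1)^j \bigl( y^{rj} - y^{rj+1} \bigr),
\]
and multiplying by $\sum_{i\geq1} i^n y^i$, the coefficient of $y^k$ collects, from the term $y^{rj}$, the contribution $\binom nj(-1)^j(k-rj)^n$ whenever $k-rj\geq 1$, and from the term $-y^{rj+1}$, the contribution $-\binom nj(-1)^j(k-rj-1)^n$ whenever $k-rj-1\geq 1$. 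Writing $(-1)^j=-(-1)^{j+1}$, this reproduces precisely the two sums appearing in the statement.

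Finally I would reconcile the summation ranges, which is the only point requiring care. The condition $k-rj\geq1$ forces $j\leq\lfloor (k-1)/r\rfloor$, whereas the statement runs the corresponding sum up to $\lfloor k/r\rfloor$; the single possibly extra index occurs when $r\mid k$, where $(k-rj)^n=0^n=0$ vanishes (using $n\geq 1$). Likewise $k-rj-1\geq1$ forces $j\leq\lfloor (k-2)/r\rfloor$, whereas the statement runs up to $\lfloor(k-1)/r\rfloor$; the single possibly extra index occurs when $r\mid(k-1)$, where again $(k-rj-1)^n=0^n=0$. Hence enlarging the ranges to those in the statement adds only vanishing boundary terms, and the identity follows. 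The whole argument is a coefficient extraction once~\eqref{flagpol} is granted, so the mild obstacle is exactly this matching of bounds, which rests on the vanishing of the $0^n$ boundary terms for $n\geq 1$ (the degenerate case $n=0$ being covered by the convention $A^{(r)}_{0,0}=1$).
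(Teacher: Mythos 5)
Your proof is correct, but it takes a genuinely different route from the one the paper actually writes out. The paper obtains the formula as a corollary of its exact Ehrhart polynomial for $\mathcal{A}^{(r)}_{n,k}$ (the first theorem of Section~\ref{sec:ehrform}, proved by a constant-term computation in $q$): since $\mathcal{A}^{(r)}_{n,k}$ is a disjoint union of $A^{(r)}_{n,k}$ simplices of volume $1/n!$, the flag Eulerian number is $n!$ times the leading coefficient in $t$ of that polynomial, and the two sums of the statement are read off termwise from the two sums of binomial coefficients. You instead work purely with generating functions: you identify $A^{(r)}_{n,k}=A^{(r)}_{n,k}(1)$ by evaluating Proposition~\ref{ehrsA} at $z=1$, expand the prefactor $(1-y^r)^n(1-y)$ in \eqref{flagpol}, and extract the coefficient of $y^k$, with the only delicate point being the reconciliation of summation bounds, which you settle correctly by observing that the extra boundary indices (occurring exactly when $r\mid k$ or $r\mid(k-1)$) contribute $0^n=0$ for $n\geq 1$. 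This is precisely the alternative the authors gesture at in their closing sentence (``this could also be obtained from \eqref{flagpol}'') but never carry out, and your bound-matching supplies the details they omit. As for what each approach buys: the paper's route makes the formula a free by-product of the stronger result (the full Ehrhart polynomial, not just its leading term), whereas yours is more elementary and self-contained, resting only on \eqref{flagpol} --- hence ultimately on the chromatic-descent bijection of Section~\ref{sec:chro} and the classical identity \eqref{ideul} --- and avoiding lattice-point counting entirely.
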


\begin{proof}
The number $ \text{Vol}(\mathcal{A}^{(r)}_{n,k}) = \frac{1}{n!} A^{(r)}_{n,k}$ can be obtained as the dominant 
coefficient of the Ehrhart polynomial.
From the exact formula we have just obtained, this dominant coefficient is:
\[
     \sum_{j=0}^{ \lfloor (k-1)/r \rfloor } \binom nj  \frac{ (-1)^{j+1} }{n!}  (k-rj-1)^n  
   - \sum_{j=0}^{ \lfloor     k/r \rfloor } \binom nj  \frac{ (-1)^{j+1} }{n!}  (k-rj)^n.
\]
This is the announced formula up to the normalization factor $n!$.
This could also be obtained from \eqref{flagpol}.
\end{proof}

Note that the particular case $r=1$ gives a well-known formula:
\begin{align*} 
  A_{n,k} &= \sum_{j=0}^k \binom nj (-1)^{j+1} ( (k-j)^n - (k-j+1)^n )  \\
          &= \sum_{j=1}^{k+1} \binom n{j-1} (-1)^{j} (k-j+1)^n - \sum_{j=0}^{k} \binom nj (-1)^{j+1} (k-j+1)^n \\
          &= \sum_{j=0}^k  \binom{n+1}{j} (-1)^j (k-j+1)^n.
\end{align*}

\section{Bijective problems}

In this article we have obtained a combinatorial interpretation of $E^*(\mathcal{B}_{n,k}^{(r)},z)$
which differs from the one previously obtained by Li \cite{li}. 
It would be interesting to have a bijective proof that the two results are equivalent.
For convenience, let us state Li's result here. We make the convention that $\sigma(0)=\sigma^{-1}(0)=0$ for
each permutation $\sigma\in\mathfrak{S}_n$.

\begin{defi}[Li \cite{li}]
 The statistic ${\rm cover}(\sigma)$ of a permutation $\sigma$ is defined by
\[
  {\rm cover}(\sigma) := \# \big\{ i \, : \, 1 \leq i \leq n \text{ and } \sigma^{-1}(i-1) + 1 < \sigma^{-1}(i) \big\},
\]
and the statistic ${\rm cef}(\sigma,c)$ of a colored permutation $(\sigma,c)$ is defined by
\[
  {\rm cef}(\sigma,c) := \# \big\{ i  \,:\,  1\leq i \leq n, \, c_i>0,  \text{ and } \sigma(i-1)+1=\sigma(i)  \big\}.
\]
\end{defi}

Although we have used slightly different conventions, it is easily seen that Theorem~7.3 from \cite{li} can
be stated as follows.

\begin{theo}[Li \cite{li}] \label{thli}
We have:
\[
  E^*(\mathcal{B}_{n,k}^{(r)},z) = \sum_{ \substack{ (\sigma,c) \in \mathfrak{S}_n^{(r)}  \\ \cdes(\sigma,c)=rn-k     } } 
                     z^{ {\rm cover}(\sigma) + {\rm cef}(\sigma,c)   }.
\]
\end{theo}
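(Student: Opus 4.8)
The plan is to derive Theorem~\ref{thli} from the interpretation of the same polynomial that we have already obtained, rather than to reprove it through the unimodular triangulation used by Li. Since $E^*(\mathcal{B}^{(r)}_{n,k},z)=B^{(r)}_{n,k}(z)$ by definition, and Theorem~\ref{theobnr2} gives $B^{(r)}_{n,k}(z)=\sum_{\fexc(\sigma,c)=rn-k} z^{\lceil\fdes(\sigma,c)/r\rceil}$, the assertion of Theorem~\ref{thli} is equivalent to the purely combinatorial identity
\[
  \sum_{\substack{(\sigma,c)\in\mathfrak{S}^{(r)}_n\\ \cdes(\sigma,c)=rn-k}} z^{{\rm cover}(\sigma)+{\rm cef}(\sigma,c)}
  =\sum_{\substack{(\sigma,c)\in\mathfrak{S}^{(r)}_n\\ \fexc(\sigma,c)=rn-k}} z^{\lceil\fdes(\sigma,c)/r\rceil}.
\]
In other words, the goal becomes to prove that the pairs $({\rm cdes},{\rm cover}+{\rm cef})$ and $(\fexc,\lceil\fdes/r\rceil)$ are equidistributed on $\mathfrak{S}^{(r)}_n$, with $\cdes$ matched to $\fexc$ and ${\rm cover}+{\rm cef}$ matched to $\lceil\fdes/r\rceil$. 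The marginal equidistribution of $\cdes$ and $\fexc$ is already available, combining Section~\ref{sec:chro} with \cite[Theorem~1.4]{foatahan}, so only the joint statement remains to be secured.

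I would first test the scheme in the base case $r=1$, where ${\rm cef}$ vanishes identically, $\cdes$ reduces to $\des(\sigma)$, $\fexc$ to the excedance number $\#\{i:\sigma_i>i\}$, and $\lceil\fdes/r\rceil$ to $\des$; the required statement is then the equidistribution of $(\des,{\rm cover})$ with $(\#\{i:\sigma_i>i\},\des)$, which can be read off from Proposition~\ref{ehrsA} and classical Eulerian equidistributions, serving as a sanity check. For general $r$, the route that stays within the framework of this paper is to match exponential generating functions: introduce $D^{(r)}_{n,k}(z)$ for the right-hand side of Theorem~\ref{thli}, set $D^{(r)}_n(y,z)=\sum_k D^{(r)}_{n,k}(z)y^k$ and $D^{(r)}(x,y,z)=\sum_n D^{(r)}_n(y,z)\frac{x^n}{n!}$, and aim to prove $D^{(r)}(x,y,z)=e^{(1-z)y^rx}A^{(r)}(x,y,z)$, which by Theorem~\ref{relAB} identifies $D^{(r)}$ with $B^{(r)}$ coefficientwise. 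To produce the factor $e^{(1-z)y^rx}$ one expects an inclusion-exclusion parallel to Proposition~\ref{relab}, in which ${\rm cef}$ records, on the combinatorial side, the peeling of the coordinates equal to $r$ that is visible geometrically; this would reduce the closed statement to a half-open analogue, namely a Li-type interpretation of $A^{(r)}_{n,k}(z)=E^*(\mathcal{A}^{(r)}_{n,k},z)$ to be compared with Proposition~\ref{ehrsA}.

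The hard part will be the passage between value-adjacency statistics and position statistics. The quantities ${\rm cover}(\sigma)$ and ${\rm cef}(\sigma,c)$ are defined through adjacencies among the values of $\sigma$ and $\sigma^{-1}$, whereas every statistic controlled in this paper---$\des$, $\fdes$, $\fdes^*$, $\cdes$, and the weight $\des(\sigma^{-1})+1$ of Proposition~\ref{ehrsA}---is read from positions and colors. A direct bijection must therefore simultaneously convert a value-adjacency count into a ceiling of a flag descent number and an excedance-type statistic into a chromatic descent number; this is exactly the bijective equivalence that we have flagged as an open problem. I would attempt it by composing the bijections $\alpha$ and $\alpha^*$ of Section~\ref{sec:chro}, which already trade $\cdes$ for $\fdes$ and $\fdes^*$, with the colored standardization $\cstd$ of Section~\ref{sec:stanbij}, and trying to transport ${\rm cover}+{\rm cef}$ through these maps. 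Alternatively, one could look for a closed form of $\sum_{(\sigma,c)} y^{\cdes}z^{{\rm cover}+{\rm cef}}$ analogous to formula~\eqref{formulawn} and conclude by the Laplace-transform argument used in the proof of Theorem~\ref{relAC}. Either way, controlling the joint distribution of ${\rm cover}+{\rm cef}$ with $\cdes$ is the genuine obstacle, the individual equidistributions being comparatively routine.
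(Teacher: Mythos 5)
There is a genuine gap: your proposal is a reduction followed by a research plan, not a proof. You correctly observe that, given Theorem~\ref{theobnr2}, Theorem~\ref{thli} is equivalent to the identity $\sum_{\cdes(\sigma,c)=rn-k} z^{{\rm cover}(\sigma)+{\rm cef}(\sigma,c)} = \sum_{\fexc(\sigma,c)=rn-k} z^{\lceil\fdes(\sigma,c)/r\rceil}$ on $\mathfrak{S}_n^{(r)}$. But this identity is exactly what the paper \emph{deduces} as a corollary of Theorem~\ref{thli} together with Theorem~\ref{theobnr2}, and whose direct (bijective) proof it explicitly poses as an open problem. You never prove it: the routes you sketch (composing $\alpha$, $\alpha^*$ and $\cstd$; an inclusion-exclusion parallel to Proposition~\ref{relab} in which ${\rm cef}$ tracks the coordinates equal to $r$; a closed form analogous to \eqref{formulawn} followed by the Laplace-transform argument of Theorem~\ref{relAC}) are all left as intentions, and you yourself call the joint distribution of $({\rm cover}+{\rm cef},\cdes)$ versus $(\lceil\fdes/r\rceil,\fexc)$ ``the genuine obstacle.'' In effect, your missing lemma \emph{is} the open problem. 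The paper treats Theorem~\ref{thli} entirely differently: it does not reprove it, but quotes it as Li's theorem, whose proof in \cite{li} (via shellable unimodular triangulations and generating-function computations) is independent of everything in this paper; the only thing checked here is that the conventions translate. So the established logical flow is: Li's theorem plus Theorem~\ref{theobnr2} implies the identity, whereas you want the identity plus Theorem~\ref{theobnr2} to imply Li's theorem, with no independent proof of the identity available.

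A secondary but real error is your $r=1$ ``sanity check.'' For $r=1$ the needed identity reads $\sum_{\des(\sigma)=n-k} z^{{\rm cover}(\sigma)} = \sum z^{\des(\sigma)}$, the second sum being over $\sigma$ with $\#\{i:\sigma_i>i\}=n-k$; that is, the joint equidistribution of $(\des,{\rm cover})$ with the excedence--descent pair. This cannot be ``read off from Proposition~\ref{ehrsA} and classical Eulerian equidistributions'': Proposition~\ref{ehrsA} involves the pair $(\fdes,\des(\sigma^{-1}))$ on the half-open slices, and trading the inverse-descent weight for the value-adjacency statistic ${\rm cover}$ while simultaneously trading descents for excedences is precisely the nonclassical content of Stanley's conjecture in Li's formulation --- the very theorem at issue. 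So even the base case of your plan presupposes the result. If you insist on a self-contained proof, the only realistic route among those you list is the second one: first establish a half-open, Li-type analogue for $E^*(\mathcal{A}^{(r)}_{n,k},z)$ in terms of ${\rm cover}+{\rm cef}$ and then peel off the coordinates equal to $r$; but that half-open statement carries all the difficulty and is, in substance, what is proved in \cite{li}, so you would be reproducing Li's argument rather than deriving her theorem from the results of this paper.
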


From Theorem~\ref{theobnr2} and Theorem~\ref{thli}, we obtain the equality
\[
  \sum_{ \substack{ (\sigma,c) \in \mathfrak{S}^{(r)}_n \\ \fexc(\sigma,c)= k }} z^{ \lceil \fdes(\sigma,c) /r \rceil }
  =
  \sum_{ \substack{ (\sigma,c) \in \mathfrak{S}_n^{(r)} \\ \cdes(\sigma,c)= k }}  z^{ {\rm cover}(\sigma) + {\rm cef}(\sigma,c) },
\]
which appeals for a bijective proof.

The second problem is to find a bijective proof of Theorem~\ref{relAC}, i.e., of the relation:
\begin{align*}
  e^{zy^rx} \left( 1 + \sum_{n\geq1} \sum_{(\sigma,c) \in \mathfrak{S}_n^{(r)}}  
                   y^{rn-\fexc(\sigma,c)} z^{\lceil\fdes(\sigma,c)/r\rceil} \frac{x^n}{n!} \right) \\
  = 
  e^{y^rx} \left( 1 + \sum_{n\geq 1} \sum_{(\sigma,c) \in \mathfrak{S}_n^{(r)}}  
                  y^{\fdes(\sigma,c)+1} z^{\des(\sigma^{-1})+1} \frac{x^n}{n!}\right).
\end{align*}

%
%
%
%


\bigskip

\setlength{\parindent}{0pt}

\end{document}